\numberwithin{equation}{theorem}
\renewcommand{\m}{\mathfrak{m}}
\renewcommand{\n}{\mathfrak{n}}
\DeclareMathOperator{\Tr}{Tr}
\DeclareMathOperator{\fp}{fpt}
\DeclareMathOperator{\lct}{lct}
\theoremstyle{theorem}
\begin{document}

\title{$F$-singularities under generic linkage}
\author{Linquan Ma, Janet Page, Rebecca R.G., William Taylor, and Wenliang Zhang}
\address{Department of Mathematics\\ University of Utah\\ Salt Lake City\\ UT 84112}
\email{lquanma@math.utah.edu}
\address{Department of Mathematics\\ University of Illinois at Chicago\\ Chicago\\ Illinois 60607}
\email{jpage8@uic.edu}
\address{Department of Mathematics\\ Syracuse University \\ Syracuse\\ NY 13244}
\email{rirebhuh@syr.edu}
\address{Department of Mathematical Sciences \\ University of Arkansas \\ Fayetteville \\ AR 72701}
\email{wdtaylor@uark.edu}
\address{Department of Mathematics\\ University of Illinois at Chicago\\ Chicago\\ Illinois 60607}
\email{wlzhang@uic.edu}

\maketitle

\begin{center}
{\textit{Dedicated to Prof. Craig Huneke on the occasion of his 65th birthday}}
\end{center}

\begin{abstract}
Let $R=k[x_1,\dots,x_n]$ be a polynomial ring over a prefect field of positive characteristic. Let $I$ be an equi-dimensional ideal in $R$ and let $J$ be a generic link of $I$ in $S=R[u_{ij}]_{c \times r}$. We describe the parameter test submodule of $S/J$ in terms of the test ideal of the pair $(R, I)$ when a reduction of $I$ is a complete intersection or almost complete intersection. As an application, we deduce a criterion for when $S/J$ has $F$-rational singularities in these cases. We also compare the $F$-pure threshold of $(R, I)$ and $(S, J)$.
\end{abstract}

\section{Introduction}

Let $R=k[x_1,\dots,x_n]$ be a polynomial ring over a field of positive characteristic. Let $I=(f_1,\dots, f_r)$ be an equi-dimensional ideal in $R$ of height $c$, where equi-dimensional means that all associated primes of $I$ have the same height \cite{Matsumura86}. We can define a regular sequence $g_1,\dots, g_c$ in $S=R[u_{ij}]_{c \times r}$ via $g_i:=u_{i1}f_1+\cdots +u_{ir}f_r$, where the $u_{ij}$  are variables over $S$. Then $J=(g_1,\dots,g_c):I$ is called a generic link of $I$ in $S=R[u_{ij}]$. The study of generic linkage has attracted considerable attention and has been developed widely from both algebraic and geometric points of view \cite{HunekeUlrichThestructureoflinkage}, \cite{HunekeUlrichAlgebraiclinkage}, \cite{ChardinUlrichLiaisonandCMregularity}, \cite{EisenbudHunekeUlrichHeightsofidealsofminors}, \cite{NiuSingularitiesofgenericlinkage}.

In contrast to the quick and deep development of singularity theories in the past decades, much less has been known about the behaviors of singularities under generic linkage. A special case is a result of Chardin and Ulrich \cite{ChardinUlrichLiaisonandCMregularity} which says that if $R/I$ is a complete intersection and has rational (resp. $F$-rational) singularities, then a generic link $S/J$ also has rational (resp. $F$-rational singularities). This result in characteristic zero has been vastly extended in recent work of Niu \cite{NiuSingularitiesofgenericlinkage}, which is our main motivation for this research.

\begin{theorem}[Theorem 1.1 in \cite{NiuSingularitiesofgenericlinkage}]
\label{theorem: Niu}
Let $J$ be a generic link of a reduced and equidimensional ideal $I$ in $S=R[u_{ij}]$ and assume that the characteristic of $k$ is $0$. We have
\begin{enumerate}
\item $\omega_{S/J}^{GR}\cong \scr{J}(R, I^c)\cdot (S/J)$, where $\omega_{S/J}^{GR}$ denotes the Grauert-Riemenschneider canonical sheaf of $S/J$ and $\scr{J}(R, I^c)$ denotes the multiplier ideal of the pair $(R, I^c)$,
\item $\lct(S, J)\geq \lct(R, I)$. In particular, if the pair $(R, I^c)$ is log canonical, then the pair $(S, J^c)$ is also log canonical.
\end{enumerate}
\end{theorem}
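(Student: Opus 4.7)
The plan is to construct a common log resolution relating $(R, I)$ and $(S, J)$ and then apply Grauert--Riemenschneider together with the classical linkage isomorphism $\omega_{S/J} \cong (I + (g))/(g)$ (viewed as an $(S/J)$-module via the fact that $J \cdot I \subseteq (g)$) in order to recognize $\omega_{S/J}^{GR}$ as a twist of the multiplier ideal. Concretely, I would start with a log resolution $\pi \colon X \to \Spec R$ of $(R, I)$ satisfying $I \cdot \O_X = \O_X(-F)$, so that $\scr{J}(R, I^c) = \pi_* \O_X(K_{X/R} - cF)$ by definition. Base change to $S$ along the smooth projection $\Spec S \to \Spec R$ then yields a log resolution $q \colon Y := X \times \bA^{cr} \to \Spec S$ of $IS$ with $K_{Y/S}$ the pullback of $K_{X/R}$.

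The decisive local computation is that on $Y$ the generators of the complete intersection defining $J$ factor through $F_Y := p^{-1}(F)$ in a very explicit way: where $I \cdot \O_X = (f)$ and $f_i = a_i f$ with some $a_j$ a unit, one has $g_k = \bigl(\sum_i u_{ki} a_i\bigr) f = h_k f$, where the $h_k$ are generic linear forms in the $u$-variables forming a regular sequence in $\O_Y$ cutting out a smooth codimension-$c$ subvariety $W \subset Y$ meeting $F_Y$ transversally. Thus $(g) \cdot \O_Y = \O_Y(-cF_Y) \cdot (h_1, \ldots, h_c)$. Blowing up $Y$ along $W$ produces $\sigma \colon Y' \to Y$ with smooth exceptional divisor $E$ satisfying $K_{Y'/Y} = (c-1)E$, and the proper transform of $V(J)$ in $Y'$ gives a resolution $\tilde \tau \colon \tilde Y \to V(J)$.

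By Grauert--Riemenschneider applied to $\tilde \tau$, one has $\omega_{S/J}^{GR} = \tilde \tau_* \omega_{\tilde Y}$. Carrying out the discrepancy bookkeeping on $Y'$ and pushing down first via $\sigma$ and then via $q$ reduces this push-forward, modulo $J$, to $q_* \O_Y(K_{Y/S} - cF_Y)$, which equals $\scr{J}(R, I^c) \cdot S$ by the smooth base change. Under the linkage realization of $\omega_{S/J}$ as an $(S/J)$-submodule of $S/(g)$ (up to a canonical twist), this yields the isomorphism $\omega_{S/J}^{GR} \cong \scr{J}(R, I^c) \cdot S/J$ asserted in (1). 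For part (2), the analogous calculation with exponent $\lambda$ in place of $c$ yields $\scr{J}(S, J^\lambda) \supseteq \scr{J}(R, I^\lambda) \cdot S/J$; taking $\lambda < \lct(R, I)$ so that $\scr{J}(R, I^\lambda) = R$ gives $\scr{J}(S, J^\lambda) = S/J$, hence $\lct(S, J) \geq \lct(R, I)$, and log canonicity transfers directly from $(R, I^c)$ to $(S, J^c)$.

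The principal obstacle is in step two: verifying that the single blowup along $W$ genuinely produces a resolution of $V(J)$ and not merely of the reducible complete intersection $V((g)) = V(IS) \cup V(J)$. This requires a careful analysis of how the proper transforms of these two components separate in $Y'$ and of the transversality of $E$ to $F_Y$ so that Grauert--Riemenschneider applies cleanly. Secondary obstacles include aligning the discrepancy output with the precise twist $-cF$ defining $\scr{J}(R, I^c)$ rather than a shifted variant, and handling the case where $I$ is not locally a complete intersection, where $\pi$ may need to be refined so that the factorization $f_i = a_i f$ above holds uniformly; these refinements should not change the structure of the argument but will require some additional bookkeeping.
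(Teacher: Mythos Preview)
The paper does not contain a proof of this statement: Theorem~\ref{theorem: Niu} is quoted verbatim from \cite{NiuSingularitiesofgenericlinkage} as background and motivation for the characteristic-$p$ results that occupy the rest of the paper, and no argument for it is given here. There is therefore no ``paper's own proof'' to compare your proposal against.

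That said, your outline is essentially the strategy of Niu's original proof: pull back a log resolution of $(R,I)$ along the smooth projection $\Spec S \to \Spec R$, use the local principality of $I\cdot\O_X$ to factor each $g_k$ as $h_k f$ with the $h_k$ a regular sequence transverse to $F_Y$, blow up along $W = V(h_1,\dots,h_c)$ to separate the two components of $V((g))$, and then push down via Grauert--Riemenschneider. The obstacle you single out---that a single blowup along $W$ really resolves $V(J)$---is exactly the point Niu addresses; the strict transform of $V(J)$ is identified with the exceptional divisor over $W$, which is a projective bundle and hence smooth, and transversality of $W$ with $F_Y$ gives the required normal-crossing condition. One small slip in your part~(2): you write $\scr{J}(S,J^\lambda)\supseteq \scr{J}(R,I^\lambda)\cdot(S/J)$, but $\scr{J}(S,J^\lambda)$ is an ideal of $S$, so the inclusion should read $\scr{J}(S,J^\lambda)\supseteq \scr{J}(R,I^\lambda)\cdot S$.
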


This result gives a nice criterion for a generic link to have rational singularities in characteristic $0$. It also has applications to bounding the Castelnuovo-Mumford regularity of projective varieties \cite[Corollary 1.2]{NiuSingularitiesofgenericlinkage}. Since test ideals and $F$-pure thresholds are characteristic $p$ analogues of multiplier ideals and log canonical thresholds ({\it c.f.} \cite{BlickleSchwedeTuckerF-singularitiesvialterations} and \cite{HaraYoshidaGeneralizedTestIdeals}), it is natural to ask whether analogues of Theorem \ref{theorem: Niu} hold for test ideals and $F$-pure thresholds. Our main result is the following, which partially extends Theorem \ref{theorem: Niu} to characteristic $p$ and generalizes \cite[Theorem 3.13]{ChardinUlrichLiaisonandCMregularity} in characteristic $p$.

\begin{theorem}[Theorem \ref{theorem--parameter test submodules under generic linkage}, Corollary \ref{corollary -- generic link lower bound}]
Let $J$ be a generic link of an equi-dimensional ideal $I$ in $S=R[u_{ij}]$ and assume the characteristic of $k$ is $p>0$.
\begin{enumerate}
\item Suppose $I$ is reduced and that a reduction of $I$ is a complete intersection or an almost complete intersection. Then $\tau(\omega_{S/J}) \cong\tau(R, I^c)\cdot(S/J)$, where $\tau(\omega_{S/J})$ denotes the parameter test submodule and $\tau(R, I^c)$ denotes the test ideal of the pair $(R, I^c)$.
\item Suppose that a reduction of $I$ is a complete intersection. Then $\fp_S(J)\geq \fp_R(I)$. In particular, if the pair $(R, I^c)$ is $F$-pure, then the pair $(S, J^c)$ is also $F$-pure.
\end{enumerate}
\end{theorem}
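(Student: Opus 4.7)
The plan is to prove part (1) first in the complete intersection case, extend it using reductions, and then deduce part (2) by a parallel Frobenius splitting argument.

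Assume first that $I = (h_1, \ldots, h_c)$ is itself a complete intersection. Viewing $g_i = \sum_j u_{ij} h_j$ as coordinates of the column vector $(u_{ij})(h_1,\ldots,h_c)^t$, the matrix is invertible after inverting $\det(u_{ij})$, so $(g_1,\ldots,g_c)S_{\det(u)} = I\cdot S_{\det(u)}$ and the link $J$ becomes the unit ideal there. All the interesting behavior is thus concentrated along $V(\det(u))$. Off this locus $S/(g_1,\ldots,g_c)$ is obtained from $R/I$ by a faithfully flat base change, so its parameter test submodule is determined by $\tau(R, I^c)$ via the standard complete intersection identification $\tau(\omega_{R/I}) \cong \tau(R, I^c)\cdot \omega_{R/I}$. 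The classical linkage isomorphism $\omega_{S/J} \cong I\cdot S/(g_1,\ldots,g_c)$ then transports this to the desired identification $\tau(\omega_{S/J}) \cong \tau(R, I^c)\cdot(S/J)$.

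For a general reduced unmixed $I$ admitting a complete intersection reduction $K$, one has $\tau(R, K^c) = \tau(R, I^c)$ by Brian\c{c}on--Skoda-type results for test ideals, so it suffices to compare the generic links of $K$ and $I$. The extra generators of $I$ beyond $K$ contribute extra columns to the matrix $(u_{ij})$, and one shows via direct algebraic manipulation that the parameter test submodule of the generic link of $I$ agrees with that of $K$ after the appropriate faithfully flat base change. The almost complete intersection case is handled similarly, with care taken for the extra generator beyond a maximal regular sequence. Part (2) then follows from a parallel $F$-splitting construction: if $(R, I^c)$ is $F$-pure, there exist $e \gg 0$ and $f \in I^{\lceil c(p^e-1)\rceil}$ giving a splitting of Frobenius; lifting this to $S$ and using the generic invertibility of $(u_{ij})$ one produces an element of $J^{\lceil c(p^e-1)\rceil}$ whose associated splitting witnesses $F$-purity of $(S, J^c)$.

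The main obstacle will be making the linkage isomorphism compatible with test submodules. The colon operation $J = (g_1,\ldots,g_c):I$ does not generally interact well with test ideals or parameter test submodules, so the genericity of $(u_{ij})$ is essential. Concretely, one needs to control the tight closure of $0$ in the top local cohomology module of $S/J$, match it with the pullback of the analogous object for $R/I$ under the faithfully flat extension, and then transport the computation through the linkage isomorphism. The almost complete intersection case adds a further layer of bookkeeping, and the $F$-purity step requires an explicit splitting construction rather than a direct consequence of the test submodule calculation.
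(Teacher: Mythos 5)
Your argument for part (1) breaks at the very first step. In the complete intersection case $I=(h_1,\ldots,h_c)$, inverting $\det(u_{ij})$ gives $(g_1,\ldots,g_c)_{\det(u)}=IS_{\det(u)}$, hence $J_{\det(u)}=(IS_{\det(u)}:IS_{\det(u)})=S_{\det(u)}$, so $(S/J)_{\det(u)}=0$. The open locus $D(\det(u))$ on which you compute lies entirely outside $\Supp(S/J)$, so whatever you learn there about $\tau(\omega_{S/(g_1,\ldots,g_c)})$ cannot be ``transported'' to the parameter test submodule of a module supported only on $V(\det(u))$. The linkage isomorphism $\omega_{S/J}\cong I\cdot(S/(g_1,\ldots,g_c))$ is a module identification; it does not by itself control the Frobenius action on the ring $S/J$ that defines $\tau(\omega_{S/J})$. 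The paper makes this precise by combining $\Phi^e_{S/J}=\Phi^e_{S/(g_1,\ldots,g_c)}\big|_{\omega_{S/J}}$ (Lemma \ref{lemma--restriction of trace}) with the Fedder formula $\Phi^e_{S/(g_1,\ldots,g_c)}(F^e_*(-))=\Tr^e_S\big(F^e_*(g_1^{p^e-1}\cdots g_c^{p^e-1}\cdot -)\big)$, and then computes globally using $f_k^N$ as a test element because $(S/J)_{f_k}$ is regular.

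The real crux, which your outline never touches, is combinatorial: for the hard containment $\tau(I^c)\cdot(S/J)\subseteq\tau(\omega_{S/J})$ one must expand $g_1^{p^e-1}\cdots g_c^{p^e-1}$ and, for each monomial $f_1^{\beta_1}\cdots f_r^{\beta_r}$ with $\sum_j\beta_j=c(p^e-1)$, exhibit a term whose multinomial coefficient is nonzero modulo $p$. This is exactly what Lemma \ref{claim finding exponents} provides when $r\leq c+1$: one can split $\beta$ into $c$ vectors $\alpha_i$, each with at most two nonzero entries summing to $p^e-1$, so every multinomial coefficient collapses to a binomial $\binom{p^e-1}{d}$, which is nonzero mod $p$. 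This, not any localization trick, is why the (almost) complete intersection hypothesis appears; with more generators the multinomial coefficients can genuinely vanish. Your sketch of part (2) is closer to what the paper does (transfer a splitting from $R$ to $S$ and use $(g_1,\ldots,g_c)\subseteq J$), but it elides the concrete construction: project $S^{1/p^e}$ onto the free-basis monomial $u_{11}^{a_1/p^e}\cdots u_{cc}^{a_c/p^e}$ to manufacture the $S$-linear splitting $\theta$, after rearranging exponents so that $a_1+\cdots +a_c\geq\lceil(c/r)tp^e\rceil$; this is what yields the bound $\fp_S(g_1,\ldots,g_c)\geq(c/r)\fp_R(I)$, from which $\fp_S(J)\geq\fp_R(I)$ follows when $r=c$.
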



This paper is organized as follows: in Section 2 we recall and prove some basic result for $F$-singularities and test ideals; in Section 3 we give a description of the parameter test submodule of $S/J$ in terms of the test ideal of the pair $(R, I)$, when a reduction of $I$ is a complete intersection or an almost complete intersection. This generalizes earlier results in \cite{ChardinUlrichLiaisonandCMregularity}. In Section 4 we compare the $F$-pure threshold of the pairs $(S, J)$ and $(R, I)$ when a reduction of $I$ is a complete intersection.

\subsection*{Acknowledgements}  Part of this work was done at Mathematics Research Community (MRC) in Commutative Algebra in June 2015. The authors would like to thank the staff and organizers of the MRC and the American Mathematical Society for their support. The first author would like to thank Karl Schwede, Shunsuke Takagi and Bernd Ulrich for fruitful discussions. The first author was partially supported by NSF Grant DMS \#1600198, NSF CAREER Grant DMS \#1252860/1501102 and a Simons Travel Grant when preparing this article. The second author was partially supported by NSF RTG grant DMS
\#1246844. The third author was partially supported by the NSF grant DGE \#1256260. The fifth author was partially supported by the NSF grant DMS \#1606414. The authors thank the referee for some comments which lead to improvement of the presentation of the paper.

\section{$F$-singularities and test ideals}
\label{section: basics on test ideals}

In this section we collect some basic definitions of $F$-singularities and test ideals and prove a characteristic $p>0$ analogue of Ein's Lemma in \cite{NiuSingularitiesofgenericlinkage}, which will be used in later sections.

Let $R$ be a Noetherian commutative ring of characteristic $p>0$. We will use $F^e_*R$ to denote the target of the $e$-th Frobenius endomorphism $F^e:R\xrightarrow{r\mapsto r^{p^e}}R$, {\it i.e.} $F^e_*R$ is an $R$-bimodule, which is the same as $R$ as an abelian group and as a right $R$-module, that acquires its left $R$-module structure via the $e$-th Frobenius endomorphism $F^e:R\xrightarrow{r\mapsto r^{p^e}}R$. When $R$ is reduced, we will use $R^{1/p^e}$ to denote the ring whose elements are $p^e$-th roots of elements of $R$. Note that these notations (when $R$ is reduced) $F^e_*R$ and $R^{1/p^e}$ are used interchangeably in the literature; we will do so in this paper as well assuming no confusion will arise.

\begin{remark}
If $R$ is a commutative ring essentially of finite type over a perfect field of characteristic $p>0$, then $R$ admits a canonical module denoted by $\omega_R$. Applying $\Hom_R(-,\omega_R)$ to the $e$-th Frobenius $R \to F^e_*R$ produces an $R$-linear map \[\Hom_R(F^e_*R,\omega_R)\to\Hom_R(R,\omega_R)=\omega_R.\] Moreover, we have $F_*^e\omega_R\cong \Hom_R(F^e_*R,\omega_R)$ (see \cite[Example 2.4]{BlickleSchwedeTuckerF-singularitiesvialterations} for more details). Hence we have a natural $R$-linear map:
\[\Phi_R^e: F_*^e\omega_R\cong \Hom_R(F^e_*R,\omega_R)\to\Hom_R(R,\omega_R)=\omega_R\]
called the trace map of the $e$-th Frobenius.
\end{remark}

\begin{example}
When $R=k[x_1,\dots,x_n]$ is a polynomial ring over a perfect field $k$ of characteristic $p>0$, we can identify $\omega_R$ with $R$, and $\Phi_R^e$ can be identified with the usual trace $\Tr_R^e$, that is:
$$\Tr_R^e(F_*^e(x_1^{i_1}x_2^{i_2}\cdots x_n^{i_n}))=\begin{cases}x_1^{\frac{i_1-(p^e-1)}{p^e}}x_2^{\frac{i_2-(p^e-1)}{p^e}}\cdots x_n^{\frac{i_n-(p^e-1)}{p^e}}, & {\rm if\ }\frac{i_t-(p^e-1)}{p^e}\in \mathbb{Z}\ {\rm for\ each\ }t\\0, & {\rm otherwise} \end{cases}$$
In this case $\Hom_R(F_*^eR, R)$ is a cyclic $F_*^eR$-module generated by $\Tr_R^e$. Furthermore, if $f_1,\dots, f_c$ is a regular sequence in $R$ and $T=R/(f_1,\dots,f_c)$, then we have (\cite[Corollary on page 465]{FedderFPureRat}\footnote{Fedder's result \cite[Corollary on page 465]{FedderFPureRat} assumes that the ring $R$ is a Gorenstein local ring only to ensure that $\Hom_R(F_*R,R)\cong F_*R$. In our case, $R=k[x_1,\dots,x_n]$ is a polynomial ring, so $\Hom_R(F_*R,R)$ is clearly isomorphic to $F_*R$. Hence Fedder's result applies in our case.})
\[\Phi_T^e(F_*^e(-))= \text{image of } \Tr_R^e(F_*^e(f_1^{p^e-1}\cdots f_c^{p^e-1}\cdot -)) \text{ in $T$}.\]
\end{example}

\begin{remark}
\label{rmk: trace does not change under polynomial ext}
Let $R=k[x_1,\dots,x_n]$ be a polynomial ring over a field $k$ of characteristic $p>0$ and $A=R[y_1,\dots,y_m]$ be a polynomial ring over $R$. For each ideal $I$ in $R$, it is well known and straightforward to check that
\[\Tr_R^e(F_*^e(I))A=\Tr_A^e(F_*^e(IA)).\]
\end{remark}

\begin{lemma}
\label{lemma--restriction of trace}
Let $S\to R$ be a surjection of Noetherian commutative rings of characteristic $p$. Assume that both $S$ and $R$ admit canonical module $\omega_S$  and $\omega_R$ respectively and $\dim S=\dim R$. Then
\[\Phi_R^e=\Phi_S^e|_{\omega_R}.\]
\end{lemma}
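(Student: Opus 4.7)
\begin{pf}[Proof proposal:]
The plan is to unwind both trace maps in terms of Hom-modules and then compare them via Hom-Hom adjunction applied to the finite surjection $S\twoheadrightarrow R$.

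First I would record the key identification: since $S\to R$ is a finite surjection with $\dim S=\dim R$, the canonical module of $R$ is given by $\omega_R\cong \Hom_S(R,\omega_S)$, and under this isomorphism $\omega_R$ sits as a submodule of $\omega_S$ (namely, the image of evaluation at $1_R$, or equivalently the submodule $\Ann_{\omega_S}(I)$ where $I=\ker(S\to R)$). For any finitely generated $R$-module $M$, adjunction then gives a canonical identification
\[
\Hom_R(M,\omega_R)=\Hom_R\bigl(M,\Hom_S(R,\omega_S)\bigr)=\Hom_S(M,\omega_S),
\]
where we view $M$ as an $S$-module through the surjection. Applied to $M=F^e_*R$, this gives $F^e_*\omega_R=\Hom_R(F^e_*R,\omega_R)=\Hom_S(F^e_*R,\omega_S)$, and similarly $F^e_*\omega_S=\Hom_S(F^e_*S,\omega_S)$.

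Next I would observe that the natural inclusion $F^e_*\omega_R\hookrightarrow F^e_*\omega_S$ is precisely the map obtained by dualizing the Frobenius-compatible surjection $\pi\colon F^e_*S\twoheadrightarrow F^e_*R$ into $\omega_S$; that is, a functional $\varphi\colon F^e_*R\to\omega_S$ corresponds to the functional $\varphi\circ\pi\colon F^e_*S\to\omega_S$ in $F^e_*\omega_S$. Under the identifications above, the trace map $\Phi^e_R$ is evaluation at $F^e_*(1_R)\in F^e_*R$, and likewise $\Phi^e_S$ is evaluation at $F^e_*(1_S)\in F^e_*S$. So for $\varphi\in F^e_*\omega_R$, the image of $\varphi$ in $F^e_*\omega_S$ is $\varphi\circ\pi$, and
\[
\Phi^e_S(\varphi\circ\pi)=(\varphi\circ\pi)(F^e_*(1_S))=\varphi(F^e_*(\pi(1_S)))=\varphi(F^e_*(1_R))=\Phi^e_R(\varphi),
\]
since $\pi$ sends $1_S$ to $1_R$. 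This is exactly the claim.

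The main (small) obstacle is just fixing the identifications consistently: one has to make sure that the inclusion $\omega_R\hookrightarrow\omega_S$ used implicitly in the statement is the one induced by $\omega_R\cong\Hom_S(R,\omega_S)$ via evaluation at $1_R$, and that the resulting $F^e_*\omega_R\hookrightarrow F^e_*\omega_S$ is the one induced by precomposition with $\pi$. Once these natural identifications are in place, the equality $\Phi^e_R=\Phi^e_S|_{\omega_R}$ is a one-line diagram chase, as above. In particular, no hypothesis beyond the existence of the canonical modules and the equality of dimensions is needed.
\end{pf}
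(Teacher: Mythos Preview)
Your proof is correct and essentially identical to the paper's: both identify $\omega_R=\Hom_S(R,\omega_S)\hookrightarrow\omega_S$, use adjunction to realize $F^e_*\omega_R=\Hom_S(F^e_*R,\omega_S)\hookrightarrow \Hom_S(F^e_*S,\omega_S)=F^e_*\omega_S$, and then reduce the equality $\Phi^e_R=\Phi^e_S|_{\omega_R}$ to the commutativity of the Frobenius square $S\to F^e_*S$, $R\to F^e_*R$. You make the ``evaluation at $F^e_*(1)$'' interpretation explicit where the paper leaves it as a diagram chase, but the content is the same.
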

\begin{proof}
Under our assumptions, we have $\omega_R=\Hom_S(R,\omega_S)$ and the surjection $S\to R$ induces an inclusion $\omega_R=\Hom_S(R,\omega_S)\hookrightarrow \omega_S$. Consider the following diagram
\[\xymatrix{
\Hom_R(F^e_*R,\Hom_S(R,\omega_S)) \ar[d] \ar[r] & \Hom_R(R,\Hom_S(R,\omega_S)) \ar[r]^-{\sim} \ar[d]& \Hom_S(R,\omega_S)\ar[d] \\
\Hom_S(F^e_*S,\omega_S) \ar[r] &\Hom_S(S,\omega_S) \ar[r]^-{\sim} &\omega_S
}\]
Note that the top row (resp. the bottom row) induces the trace map $\Phi_R^e$ (resp. $\Phi_S^e$). To prove our lemma, it suffices to prove
\begin{enumerate}
\item[(a)] the vertical map on the left is an inclusion, and
\item[(b)] the diagram commutes
\end{enumerate}

To prove (a), note that the vertical map on the left can be refined further as
\begin{align}
\Hom_R(F^e_*R,\Hom_S(R,\omega_S)) &= \Hom_S(F^e_*R,\Hom_S(R,\omega_S))\notag\\
&\hookrightarrow \Hom_S(F^e_*S,\Hom_S(R,\omega_S))\ {\rm since\ }F^e_*S\twoheadrightarrow F^e_*R\notag\\
&\hookrightarrow \Hom_S(F^e_*S,\omega_S)\ {\rm since\ }\Hom_S(R,\omega_S)\hookrightarrow \omega_S\notag
\end{align}

To prove (b), note that the commutativity follows directly from the commutativity of
\[\xymatrix{
S\ar[r] \ar[d] & F^e_*S \ar[d]\\
R\ar[r] &F^e_*R
}\]
\end{proof}

\begin{definition}[{\it cf.} Definition 3.1 in \cite{HaraGeometricinterpretation} and Definition 2.33 in \cite{BlickleSchwedeTuckerF-singularitiesvialterations}]
\label{definition--parameter test submodule}
Let $R$ be an $F$-finite Noetherian integral domain of characteristic $p$. The \textit{parameter test submodule} $\tau(\omega_R)$ is the unique smallest nonzero submodule $M$ of $\omega_R$ such that $\Phi_R(F_*M)\subseteq M$. $R$ is called \textit{$F$-rational} if $R$ is Cohen-Macaulay and $\tau(\omega_R)=\omega_R$. Note that this is not the original definition of $F$-rationality, but is known to be equivalent \cite{SmithFRatImpliesRat}.
\end{definition}

\begin{definition}[{\it cf.} Definition 3.16 and Theorem 3.18 in \cite{SchwedeTestidealsinnonQ-Gorensteinrings}]
\label{definition--test ideals}
Let $R$ be an $F$-finite Noetherian integral domain of characteristic $p$. Let $I\subseteq R$ be a nonzero ideal and $t\in \Q_{\geq 0}$. We define the \textit{test ideal} $\tau(R, I^t)$, abbreviated $\tau(I^t)$, to be the unique smallest nonzero ideal $J\subseteq R$ such that $\phi(F^e_*(I^{\lceil t(p^e-1)\rceil}J))\subseteq J$ for all $e>0$ and all $\phi\in\Hom_R(F^e_*R, R)$.
\end{definition}

\begin{definition}[{\it cf.} Definitions 1.3 and 2.1 and Proposition 2.2 in \cite{TakagiWatanabeFpurethresholds}]
\label{definition--F-pure pairs,strongly F-regular pairs, F-pure threshholds}
Let $R$ be an $F$-finite, local, Noetherian, integral domain of characteristic $p$. Let $I \subset R$ be an ideal and $t \geq 0$ be a real number.
\begin{enumerate}
\item The pair $(R,I^t)$ is \textit{$F$-pure} if for all large $e \gg 0$, there exists an element $d \in I^{\lfloor t(p^e-1) \rfloor}$ such that $(F^e_*d) R \hookrightarrow F^e_*R$ splits as an $R$-module homomorphism.
\item The pair $(R, I^t)$ is \textit{strongly $F$-regular} if for every $c\neq 0$ there exists $e \geq 0$ and $d \in I^{\lceil tp^e \rceil}$ such that $F^e_*(cd)R \hookrightarrow F^e_*R$ splits as an $R$-module homomorphism.
\item The \textit{$F$-pure threshold} $\fp_R(I)$ of $(R,I)$ is $\sup \{ s \in \mathbb{R}_{\geq 0} | \text{ the pair } (R,I^{s}) \text{ is $F$-pure}\}$, and when $R$ is strongly $F$-regular, we also have $\fp_R (I)=\sup \{ s \in \mathbb{R}_{\geq 0} | \text{ the pair } (R,I^{s}) \text{ is strongly $F$-regular}\}$.
\end{enumerate}
\end{definition}

\begin{remark}
\label{remark -- strong F-regularity test ideal}
Note that when $R$ is local, $(R,I^t)$ is strongly $F$-regular if and only if  $\tau(I^t)=R$.  Indeed, suppose $(R,I^t)$ is strongly $F$-regular.  Pick a nonzero element $c \in J$ and take $e\gg 0$ and $d\in I^{\lceil tp^e\rceil}$ satisfying the conditions of strong $F$-regularity for $c$, and let $\phi:F^e_*R\to R$ be a map such that $\phi(F^e_*(cd))=1$.  Then \[\phi(F^e_*(I^{\lceil t(p^e-1)\rceil}J))\supseteq \phi(F^e_*(I^{\lceil tp^e\rceil}J))=R,\] and so $\tau(I^t)=R$.

On the other hand, if $\tau(I^t)=R$, $0\neq c\in R$, and $a\in I^{\lceil t\rceil}$, then there exists $e\geq 0$ and $\phi:F^e_*R\to R$ such that $\phi(F^e_*(I^{\lceil t(p^e-1)\rceil}acR))=R$.  Let $b\in   I^{\lceil t(p^e-1)\rceil}$ and $f\in R$ such that $\phi(F^e_*(c(abf)))=1$.  Then we are done once we note that $abf\in I^{\lceil t\rceil}I^{\lceil t(p^e-1)\rceil}\subseteq I^{\lceil tp^e\rceil}$.
\end{remark}

We will need the following important description of test ideals:
\begin{theorem}[{\it cf.} Proof of Theorem 3.18 in \cite{SchwedeTestidealsinnonQ-Gorensteinrings}]
\label{theorem--description of test ideals}
With the notations as in Definition \ref{definition--test ideals}, for any nonzero $a\in\tau(I^t)$, we have:
$$\tau(I^t)=\sum_{e\geq 0}\sum_{\phi}\phi(F_*^e(aI^{\lceil t(p^e-1)\rceil}))$$
where the inner sum runs over all $\phi\in\Hom_R(F_*^eR, R)$.
\end{theorem}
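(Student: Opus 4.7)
The plan is to denote the right-hand side by $J$ and prove $J = \tau(I^t)$ via the minimality characterization in Definition~\ref{definition--test ideals}. Note that $J$ is an ideal of $R$, being a sum of images of $R$-linear maps into $R$.

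For the inclusion $J \subseteq \tau(I^t)$, I would argue directly from the defining property of $\tau(I^t)$. Since $\tau(I^t)$ is stable under every operator $\phi(F_*^e(I^{\lceil t(p^e-1)\rceil}\cdot -))$ with $e>0$ and $\phi\in\Hom_R(F_*^eR, R)$, and since $a\in\tau(I^t)$, each summand with $e>0$ lies in $\tau(I^t)$. The $e=0$ summand is $aR \subseteq \tau(I^t)$ since $\tau(I^t)$ is an ideal containing $a$. Hence $J \subseteq \tau(I^t)$.

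For the reverse inclusion $\tau(I^t) \subseteq J$, by minimality it suffices to verify that $J$ is nonzero and closed under every operator $\phi'(F_*^{e'}(I^{\lceil t(p^{e'}-1)\rceil}\cdot -))$ for $e'>0$. Nonzeroness is immediate: the $e=0$, $\phi=\id$ term contributes $a$, so $a\in J$. For the closure property, I would take a typical generator $\phi(F_*^e(a i))$ of $J$ with $i\in I^{\lceil t(p^e-1)\rceil}$, multiply by $j\in I^{\lceil t(p^{e'}-1)\rceil}$, and apply $\phi'$. Using the $R$-linearity of $\phi$ (which allows $j$ to be absorbed inside via the identity $j\cdot F_*^e(x) = F_*^e(j^{p^e}x)$), one rewrites
$$\phi'\bigl(F_*^{e'}(j \cdot \phi(F_*^e(a i)))\bigr) = \psi\bigl(F_*^{e+e'}(a\cdot j^{p^e} i)\bigr),$$
where $\psi := \phi'\circ F_*^{e'}\phi$ belongs to $\Hom_R(F_*^{e+e'}R, R)$ under the canonical identification $F_*^{e'}F_*^eR \cong F_*^{e+e'}R$.

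The one delicate point, and essentially the only obstacle, is the ceiling comparison needed to place this element back into $J$: one must check that $j^{p^e} i \in I^{\lceil t(p^{e+e'}-1)\rceil}$. Now $j^{p^e}i$ lies in $I^{p^e\lceil t(p^{e'}-1)\rceil + \lceil t(p^e-1)\rceil}$, and this integer exponent is bounded below by $p^e t(p^{e'}-1) + t(p^e-1) = t(p^{e+e'}-1)$, hence is at least $\lceil t(p^{e+e'}-1)\rceil$. Consequently $\psi(F_*^{e+e'}(a \cdot j^{p^e} i))$ is a summand contributing to $J$, which establishes the required stability and completes the proof.
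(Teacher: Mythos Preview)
Your argument is correct. The paper does not give its own proof of this statement; it simply cites the proof of Theorem~3.18 in Schwede's paper \cite{SchwedeTestidealsinnonQ-Gorensteinrings}, and your argument is exactly the standard one found there: show the right-hand side $J$ is contained in $\tau(I^t)$ by stability, and show the reverse by checking $J$ itself satisfies the defining closure property and invoking minimality. The composition $\psi = \phi' \circ F_*^{e'}\phi$ and the ceiling estimate $p^e\lceil t(p^{e'}-1)\rceil + \lceil t(p^e-1)\rceil \geq \lceil t(p^{e+e'}-1)\rceil$ are handled correctly.
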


\begin{remark}
\label{remark--choose test element}
With the notations as in Definition \ref{definition--test ideals}, the following holds (\cite[3.3]{BlickleSchwedeTakagiZhangDiscretenessRationality})
\begin{equation}
\label{test ideal using big test element}
\tau(I^t)=\sum_{e\geq 0}\sum_{\phi\in \Hom_R(F^e_*R,R)}\phi(F^e_*(dI^{\lceil tp^e \rceil }))
\end{equation}
where $d$ is a big test element (which is just a nonzero element in $\tau(R)=\tau(R, I^0)$).

If $R=k[x_1,\dots,x_n]$ is a polynomial ring over a perfect field $k$ of characteristic $p>0$, then one can set $d=1$ in (\ref{test ideal using big test element}) and $\Hom_R(F_*^eR, R)$ is a cyclic $F^e_*R$-module generated by $\Tr_R^e$ as discussed earlier. Hence by (\ref{test ideal using big test element}),
\begin{align}
\tau(I^t)&=\sum_{e\geq 0}\sum_{\phi\in \Hom_R(F^e_*R,R)}\phi(F_*^e(aI^{\lceil t(p^e-1)\rceil}))=\sum_{e\geq 0}\Tr_R^e(F_*^e(aI^{\lceil t(p^e-1)\rceil})),\ {\rm for\ any\ }a\in\tau(I^t)\notag\\
&=\sum_{e\geq 0}\sum_{\phi\in \Hom_R(F^e_*R,R)}\phi(F^e_*(I^{\lceil tp^e \rceil }))=\sum_{e\geq 0}\Tr_R^e(F^e_*(I^{\lceil tp^e \rceil }))\notag
\end{align}
\end{remark}

\begin{remark}
\label{remark--description of parameter test submodule}
With the notations as in Definition \ref{definition--parameter test submodule}, one can show that if $R_{a'}$ is regular, then for every sufficiently large power $a$ of $a'$, $\tau(\omega_R)=\sum_e\Phi_R^e(F^e_*(a\cdot \omega_R))$. This can be proved by a similar argument as \cite[Lemma 3.6, Lemma 3.8]{SchwedeTuckerAsurveyoftestideals} so we omit the details.
\end{remark}

The following result from \cite{SchwedeTuckerAsurveyoftestideals} will also be used. These results were originally proved in \cite{HaraYoshidaGeneralizedTestIdeals} and \cite{HaraTakagiOnGeneralizationTestideals}, and they hold as long as $R$ is $F$-finite. We will only state the version of these results that we need.

\begin{lemma}[{\it cf.} Theorem 6.9 in \cite{SchwedeTuckerAsurveyoftestideals}]
Let $R$ be an integral domain essentially of finite type over a perfect field of characteristic $p>0$ and let $I,J\subseteq R$ be nonzero ideals and $t\in \R_{\geq 0}$.
\begin{enumerate}
\item If $J$ is a reduction of $I$, then $\tau(I^t)=\tau(J^t)$.
\item If $J$ is generated by $r$ elements, then $\tau(J^r)=J\tau(J^{r-1})$.
\end{enumerate}
\end{lemma}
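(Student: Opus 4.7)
The plan is to prove the two parts separately, leaning in both cases on the explicit descriptions of test ideals from Theorem \ref{theorem--description of test ideals} and Remark \ref{remark--choose test element}.

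For part (1), the inclusion $\tau(J^t) \subseteq \tau(I^t)$ is immediate from the smallest-ideal definition: since $J^m \subseteq I^m$, the ideal $\tau(I^t)$ is automatically closed under the operators $\phi(F^e_*(J^{\lceil t(p^e-1)\rceil} \cdot -))$, hence contains the smallest such ideal $\tau(J^t)$. For the reverse containment I would exploit the reduction relation $I^{n+k} = J^k I^n$ (valid for all $k \geq 0$ and a fixed $n$): pick any nonzero $a' \in \tau(J^t)$ and any nonzero $c \in I^n$, and set $a := a'c$. Then $a$ is a nonzero element of $\tau(I^t)$ (because $\tau(J^t) \subseteq \tau(I^t)$), and Theorem \ref{theorem--description of test ideals} gives $\tau(I^t) = \sum_{e,\phi} \phi(F^e_*(aI^{\lceil t(p^e-1)\rceil}))$ and $\tau(J^t) = \sum_{e,\phi} \phi(F^e_*(a'J^{\lceil t(p^e-1)\rceil}))$; the reduction identity then yields
\[
aI^{\lceil t(p^e-1)\rceil} = a'c\cdot I^{\lceil t(p^e-1)\rceil} \subseteq a'\cdot I^{n+\lceil t(p^e-1)\rceil} = a'J^{\lceil t(p^e-1)\rceil}I^n \subseteq a'J^{\lceil t(p^e-1)\rceil},
\]
so each summand of $\tau(I^t)$ is contained in the corresponding summand of $\tau(J^t)$, finishing the other inclusion.

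For part (2), I would write $J = (j_1, \dots, j_r)$ and use the formula $\tau(J^s) = \sum_{e,\phi} \phi(F^e_*(dJ^{sp^e}))$ from Remark \ref{remark--choose test element} for a fixed big test element $d$ (valid when $s$ is an integer since then $\lceil sp^e\rceil = sp^e$). The combinatorial heart of the argument is the identity $J^{rp^e} = J^{[p^e]}J^{(r-1)p^e}$: the inclusion $\supseteq$ is obvious, and $\subseteq$ is pigeonhole, as any monomial generator $j_1^{a_1}\cdots j_r^{a_r}$ of $J^{rp^e}$ with $\sum a_i = rp^e$ must have some $a_i \geq p^e$ and thus factors as $j_i^{p^e}$ times an element of $J^{(r-1)p^e}$. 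Combined with the projection-formula identity $F^e_*(j^{p^e}x) = j\cdot F^e_*(x)$, arising from the Frobenius-twisted left $R$-action on $F^e_*R$, this produces
\[
F^e_*(dJ^{[p^e]}J^{(r-1)p^e}) = J \cdot F^e_*(dJ^{(r-1)p^e})
\]
as $R$-submodules of $F^e_*R$. Applying any $R$-linear $\phi$ and summing over $e, \phi$ then delivers the chain of equalities $\tau(J^r) = \sum\phi(F^e_*(dJ^{rp^e})) = \sum\phi(F^e_*(dJ^{[p^e]}J^{(r-1)p^e})) = J\sum\phi(F^e_*(dJ^{(r-1)p^e})) = J\tau(J^{r-1})$.

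The main obstacles are more conceptual than computational: in (1) one has to recognize that multiplying a test element for $\tau(J^t)$ by an element of $I^n$ is exactly the bridge provided by the reduction identity, and in (2) one has to carefully track the Frobenius-twisted $R$-action when moving $p^e$-th powers of generators of $J$ across $F^e_*$ — routine, but easy to mishandle.
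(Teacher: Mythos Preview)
The paper does not supply its own proof of this lemma: it is stated with a citation to \cite{SchwedeTuckerAsurveyoftestideals} and used as a black box, so there is no proof in the paper to compare against.

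That said, your argument is correct and is essentially the standard one. In part (1) the key insight---multiplying a nonzero element of $\tau(J^t)$ by a nonzero element of $I^n$ so that the reduction identity $I^{n+k}=J^kI^n$ converts powers of $I$ into powers of $J$---is exactly what is needed, and your use of Theorem~\ref{theorem--description of test ideals} on both sides is clean. In part (2) the pigeonhole equality $J^{rp^e}=J^{[p^e]}J^{(r-1)p^e}$ together with $F^e_*(j^{p^e}x)=j\cdot F^e_*x$ is precisely the mechanism behind Skoda-type statements for test ideals, and your chain of equalities is valid. One small point worth making explicit: the big test element $d$ in Remark~\ref{remark--choose test element} can be chosen independently of the exponent (any $d$ with $R_d$ regular works for every pair $(R,J^s)$), so the same $d$ serves in both $\tau(J^r)$ and $\tau(J^{r-1})$; you use this implicitly in the final line.
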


We are ready to prove the characteristic $p>0$ analogue of Ein's Lemma in \cite{NiuSingularitiesofgenericlinkage}:

\begin{lemma}[Ein's Lemma in characteristic $p>0$]
\label{lemma--Ein's lemma in char p}
Let $R$ be an integral domain essentially of finite type over a perfect field of characteristic $p>0$ and let $I\subseteq R$ be an equi-dimensional and unmixed ideal of codimension $c$.  If $\tau(I^{c-1})=R$, then $\tau(I^c)=I$.  In particular, if $R$ is strongly $F$-regular and $(R,I^c)$ is $F$-pure, then $\tau(I^c)=I$.
\end{lemma}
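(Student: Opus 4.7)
The plan is to establish the two inclusions $I \subseteq \tau(I^c)$ and $\tau(I^c) \subseteq I$ separately, using the explicit description of test ideals recalled in Theorem \ref{theorem--description of test ideals} and the Skoda-type identities from the lemma preceding Lemma \ref{lemma--Ein's lemma in char p}.

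For the inclusion $I \subseteq \tau(I^c)$, I would fix a big test element $d \in R$ and, by Remark \ref{remark--choose test element}, write
\[
1 \in \tau(I^{c-1}) = \sum_e \sum_\phi \phi\bigl(F_*^e(d \cdot I^{\lceil(c-1)p^e\rceil})\bigr)
\]
as a finite sum $1 = \sum_i \phi_i(F_*^{e_i}(d b_i))$ with $\phi_i \in \Hom_R(F_*^{e_i}R, R)$ and $b_i \in I^{(c-1)p^{e_i}}$. Then for any $f \in I$, the $R$-linearity of each $\phi_i$ rewrites $f = f \cdot 1 = \sum_i \phi_i(F_*^{e_i}(d \cdot f^{p^{e_i}} b_i))$. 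Since $f^{p^{e_i}} b_i \in I^{p^{e_i}}\cdot I^{(c-1)p^{e_i}} = I^{cp^{e_i}}$, the same description produces $f \in \tau(I^c)$.

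For the reverse inclusion $\tau(I^c) \subseteq I$, I would exhibit a reduction $J = (g_1, \dots, g_c) \subseteq I$ generated by exactly $c$ elements (which, by unmixedness of $I$ and $\operatorname{ht}(J)=c$, automatically form a regular sequence). The existence of such a $J$ is standard once the residue field is infinite, and passing to a faithfully flat extension of $R$ with this property preserves both the hypothesis and the conclusion because test ideals commute with such extensions. Part (1) of the Skoda-type lemma gives $\tau(I^c) = \tau(J^c)$, and applying part (2) with $r = c$ yields
\[
\tau(I^c) = \tau(J^c) = J \cdot \tau(J^{c-1}) = J \cdot \tau(I^{c-1}) = J \cdot R = J \subseteq I,
\]
where the penultimate equality uses the hypothesis $\tau(I^{c-1}) = R$. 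Combined with the first step, this gives $\tau(I^c) = I$. The ``in particular'' clause is then immediate: if $R$ is strongly $F$-regular and $(R, I^c)$ is $F$-pure, then $c - 1 < c \leq \fp_R(I)$ by Definition \ref{definition--F-pure pairs,strongly F-regular pairs, F-pure threshholds}, so $(R, I^{c-1})$ is strongly $F$-regular and Remark \ref{remark -- strong F-regularity test ideal} forces $\tau(I^{c-1}) = R$.

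The main obstacle is securing the existence of the $c$-generated reduction $J$ in Step 2: the unmixedness assumption on $I$ together with the identity $\operatorname{ht}(I)=c$ is precisely the classical situation where general linear combinations of generators of $I$ yield such a reduction, but one must justify the reduction to the infinite residue field case and verify that the flat base change preserves the relevant test ideal computations. Once this is in hand, the rest of the argument is a direct concatenation of Theorem \ref{theorem--description of test ideals} and the Skoda-type lemma.
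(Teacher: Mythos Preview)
Your first inclusion $I\subseteq\tau(I^c)$ is fine and matches the paper's argument (their inequality $I\tau(I^{t-1})\subseteq\tau(I^t)$ at $t=c$).

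The gap is in your second step. The assertion that an unmixed ideal $I$ of height~$c$ admits a reduction $J$ generated by exactly $c$ elements is \emph{not} ``standard''; in fact it is false in general. Even after localizing at a maximal ideal and enlarging the residue field, the minimum number of generators of a reduction is the analytic spread $\ell(I)$, which satisfies $c=\operatorname{ht}(I)\le \ell(I)\le \dim R$ and can exceed $c$. A concrete example is $I=(x,y)\cap(z,w)=(xz,xw,yz,yw)$ in $k[x,y,z,w]$: this is unmixed of height~$2$, but its fiber cone is the affine cone over the Segre surface, so $\ell(I)=3$ and no $2$-generated reduction exists. Notice, too, that your chain of equalities would yield $\tau(I^c)=J$, and combined with your first step this would force $I=J$; so your argument, taken at face value, would prove that every unmixed $I$ with $\tau(I^{c-1})=R$ is a complete intersection---a clear overreach.

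The paper's fix is to localize not at maximal ideals but at the \emph{minimal} primes $\mathfrak{p}$ of $I$. Unmixedness gives $\dim R_{\mathfrak{p}}=c$, so $\ell(IR_{\mathfrak{p}})\le c$ and a $c$-generated reduction of $IR_{\mathfrak{p}}$ does exist; your Skoda chain then yields $\tau(I^c)R_{\mathfrak{p}}\subseteq IR_{\mathfrak{p}}$ (here the hypothesis $\tau(I^{c-1})=R$ is not needed, only the containment $J\tau(J^{c-1})\subseteq J$). Because $I$ is unmixed, its associated primes are exactly these minimal $\mathfrak{p}$, so the local containments imply $\tau(I^c)\subseteq I$ globally. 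Your treatment of the ``in particular'' clause is the same as the paper's.
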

\begin{proof}
The lemma will follow from the following two inclusions:
\begin{equation}
\label{equation--containment of test ideal 1}
\tau(I^c)\subseteq I.
\end{equation}
\begin{equation}
\label{equation--containment of test ideal 2}
I\tau(I^{t-1})\subseteq \tau(I^t) \mbox{ for all } t\geq 1.
\end{equation}
Indeed,  if $\tau(I^{c-1})=R$, then $I=I\tau(I^{c-1})\subseteq \tau(I^c)\subseteq I$, and so we have equality throughout.


\begin{proof}[Proof of (\ref{equation--containment of test ideal 1})]
Since inclusion is a local condition, we may assume that $R$ is local with maximal ideal $\mathfrak{m}$. By replacing $R$ by $R[x]_{\mathfrak{m}R[x]}$, we may assume that $R$ has infinite residue field: it is straightforward to check that $\tau(I^c)R[x]_{\mathfrak{m}R[x]}=\tau((IR[x]_{\mathfrak{m}R[x]})^c)$. Now let $\mathfrak{p}$ be a minimal prime of $I$.  Since $I$ is equi-dimensional, $\dim R_\mathfrak{p}=c$.  Hence $IR_\mathfrak{p}$ has a reduction $J\subseteq IR_\mathfrak{p}$ generated by $c$ elements.  Therefore, since test ideals localize,
\[\tau(I^c)R_\mathfrak{p}=\tau((IR_\mathfrak{p})^c)=\tau(J^c)=J\tau(J^{c-1})\subseteq J\subseteq IR_\mathfrak{p}.\]
Since every associated prime of $I$ is minimal, this inclusion holds for all associated primes of $I$, hence it holds globally, i.e.\ $\tau(I^c)\subseteq I$.
\end{proof}

\begin{proof}[Proof of (\ref{equation--containment of test ideal 2})] This should be well known to experts in the field; we opt to provide a proof here since we could not locate a proper reference.
Let $t\in \R_{\geq 1}$, and pick $0\neq a\in \tau(I^t)$.
Then
\begin{align*}
I\tau(I^{t-1}) &= I\sum_{e\geq 0}\sum_{\phi}\phi\left(F_*^e\left(aI^{\lceil (t-1)(p^e-1)\rceil}\right)\right)\\
&=\sum_{e\geq 0}\sum_{\phi}\phi\left(F_*^e\left(aI^{[p^e]}I^{\lceil (t-1)(p^e-1)\rceil}\right)\right)\\
&\subseteq \sum_{e\geq 0}\sum_{\phi}\phi\left(F_*^e\left(aI^{p^e}I^{\lceil (t-1)(p^e-1)\rceil}\right)\right)\\
&=\sum_{e\geq 0}\sum_{\phi}\phi\left(F_*^e\left(aI^{p^e+\lceil (t-1)(p^e-1)\rceil}\right)\right)\\
&\subseteq \sum_{e\geq 0}\sum_{\phi}\phi\left(F_*^e\left(aI^{\lceil t(p^e-1)\rceil}\right)\right)\\
&=\tau(I^t),
\end{align*}
where the inner sum runs over all $\phi\in\Hom_R(F_*^eR, R)$ and the last inclusion following from the fact that
\[p^e+\lceil(t-1)(p^e-1)\rceil=\lceil p^e+(t-1)(p^e-1)\rceil=\lceil t(p^e-1)+1\rceil>\lceil t(p^e-1)\rceil.\qedhere\]
\end{proof}

For the last statement, if $(R,I^c)$ is $F$-pure, then the $F$-pure threshold of $I$ is at least $c$.  Since the $F$-pure threshold is the supremum of those values $t$ for which $(R,I^t)$ is strongly $F$-regular when $R$ is strongly $F$-regular \cite[Proposition 2.2]{TakagiWatanabeFpurethresholds}, we have that $(R,I^{c-1})$ is strongly $F$-regular. This means that $\tau(I^{c-1})=R$ by Remark \ref{remark -- strong F-regularity test ideal}, and hence the first statement of the lemma tells us $\tau(I^c)=I$.
\end{proof}




\section{$F$-rationality under generic linkage}
In this section, we investigate how $F$-singularities ({\it e.g.} $F$-purity, $F$-rationality, etc) behave under a generic linkage. To this end, we will also consider the behaviors of test ideals under a generic linkage. We begin with recalling the definition of a generic link.
\begin{definition}
\label{definition--generic linkage}
Let $R=k[x_1,\dots, x_n]$ be a polynomial ring over a perfect field of positive characteristic. Let $I$ be an equi-dimensional and unmixed ideal of $R$ of height $c$. Fix a generating set $\{f_1,\dots, f_r\}$ of $I$. Let $(u_{ij})$, $1\leq i\leq c$, $1\leq j\leq r$, be a $c\times r$ matrix of variables.  Consider $c$ elements $g_1,\dots,g_c$ in $S=R[u_{ij}]$ defined by \[g_i:=u_{i1}f_1+u_{i2}f_2+\cdots +u_{ir}f_r\] for $1\leq i\leq c$. Then $J=(g_1,\dots,g_c): (IS)$ is called the {\it first generic link} of $I$ with respect to $\{f_1,\dots, f_r\}$ (we also call $S/J$ the generic link of $R/I$ with respect to $\{f_1,\dots, f_r\}$).
\end{definition}
\begin{remark}
\label{remark--geometrically linked}
It is well known that under the above assumptions, if $I$ is reduced, then $IS$ and $J$ are {\it geometrically linked}, i.e., $IS=(g_1,\dots,g_c):J$ and $IS\cap J=(g_1,\dots,g_c)$. Moreover, $J$ is actually a prime ideal of height $c$ \cite[2.6]{HunekeUlrichDivisorclassgroupsdeformations}.
\end{remark}

The following theorem is our main technical result in this section.
\begin{theorem}
\label{theorem--parameter test submodules under generic linkage}
With the notation as in Definition \ref{definition--generic linkage}, assuming $I$ is reduced, we have
\begin{enumerate}
\item $\tau(\omega_{S/J})\subseteq \tau(I^c)\cdot(S/J)$;
\item If $I$ has a minimal reduction generated by at most $c+1$ elements, then $\tau(\omega_{S/J})\supseteq \tau(I^c)\cdot(S/J)$; hence $\tau(\omega_{S/J})=\tau(I^c)\cdot(S/J)$ in this case.
\end{enumerate}
\end{theorem}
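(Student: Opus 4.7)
I begin by identifying $\omega_{S/J}$ with the submodule $IT \subseteq T := S/(g_1,\dots,g_c)$ using the linkage identities $IS \cap J = (g)$ and $(g):J = IS$ from Remark~\ref{remark--geometrically linked}; under this identification, the embedding $\omega_{S/J} \hookrightarrow S/J$ sends $\omega_{S/J}$ to $I(S/J)$. Since $T$ is a complete intersection, Fedder's formula gives $\Phi_T^e(F_*^e(-)) = \Tr_S^e\bigl(F_*^e(g_1^{p^e-1}\cdots g_c^{p^e-1}\cdot -)\bigr)$, and Lemma~\ref{lemma--restriction of trace} identifies $\Phi_{S/J}^e$ with the restriction of $\Phi_T^e$ to $\omega_{S/J}$. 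Picking a common test element $a \in R$ so that both $R_a$ and $(S/J)_a$ are regular, and noting $I^{cp^e}\subseteq I^{c(p^e-1)+1}$, Remarks~\ref{remark--choose test element} and~\ref{remark--description of parameter test submodule} give
\begin{align*}
\tau(\omega_{S/J}) &= \sum_e \Tr_S^e\!\bigl(F_*^e\bigl(a\,g_1^{p^e-1}\cdots g_c^{p^e-1}\cdot IS\bigr)\bigr) \pmod{(g)},\\
\tau(I^c) &= \sum_e \Tr_R^e\!\bigl(F_*^e\bigl(aI^{c(p^e-1)+1}\bigr)\bigr).
\end{align*}

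The engine is the multinomial expansion $g_1^{p^e-1}\cdots g_c^{p^e-1} = \sum_{\mathbf A} C_{\mathbf A}\,u^{\mathbf A}\,f^{\mathbf b(\mathbf A)}$, with $\mathbf A \in \Z_{\geq 0}^{c\times r}$ of row sums $p^e-1$, $\mathbf b(\mathbf A)_j := \sum_i a_{ij}$ (so $|\mathbf b(\mathbf A)|=c(p^e-1)$), and $C_{\mathbf A} = \prod_i \binom{p^e-1}{a_{i,\bullet}}$, combined with the separable factorization $\Tr_S^e(F_*^e(u^{\mathbf N} x)) = \Tr^e_u(F_*^e(u^{\mathbf N})) \cdot \Tr_R^e(F_*^e(x))$ for $x \in R$, where $\Tr^e_u$ is the trace on $k[u_{ij}]$. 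For~\textbf{(1)}, plug $\eta = \sum_j f_j h_j \in IS$ (with $h_j = \sum_{\mathbf B} u^{\mathbf B} h_{j, \mathbf B}$, $h_{j, \mathbf B} \in R$) into the formula; each term factors as an element of $S$ times $\Tr_R^e(F_*^e(a f^{\mathbf b(\mathbf A)} f_j h_{j, \mathbf B})) \in \tau(I^c)$ (since $f^{\mathbf b(\mathbf A)} f_j h_{j,\mathbf B} \in I^{c(p^e-1)+1}$), giving $\tau(\omega_{S/J}) \subseteq \tau(I^c) \cdot T$. The containment $\tau(I^c) \subseteq I$ (proved exactly as for~(3.2) in Lemma~\ref{lemma--Ein's lemma in char p}) lands this inside $IT = \omega_{S/J}$, identifying with $\tau(I^c)(S/J)$.

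For~\textbf{(2)}, let $K = (h_1,\ldots,h_\mu)$, $\mu \leq c+1$, be the minimal reduction. By enlarging the generating set of $I$ to include the $h_j$'s via a linear change of $u$-variables---this replaces $(S,J)$ by the polynomial extension $(S[u'], JS[u'])$, under which both sides of the asserted equality scale identically---I may assume $h_j = f_j$ for $j \leq \mu$. Using $\tau(I^c) = \tau(K^c)$, the target $\tau(I^c)$ is spanned by elements $\Tr_R^e(F_*^e(a\, h^{\mathbf b^*}\, h_l))$ with $\mathbf b^* \in \Z_{\geq 0}^\mu$, $|\mathbf b^*| = c(p^e-1)$, and $l \in \{1,\ldots,\mu\}$. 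Given such $(\mathbf b^*, l)$, I construct a matrix $\mathbf A^* \in \Z_{\geq 0}^{c \times \mu}$ with row sums $p^e-1$, column sums $\mathbf b^*$, and \emph{at most two nonzero entries per row}; the existence reduces to a weighted-flow problem on a connected graph with $\mu$ vertices and $c$ edges, which is solvable precisely when $\mu \leq c+1$. Two-sparseness collapses each row multinomial into a binomial $\binom{p^e-1}{a}$, nonzero mod $p$ by Lucas, so $C_{\mathbf A^*}$ is a unit. Extending $\mathbf A^*$ to $c \times r$ by zeros and setting $c_{ij} := p^e - 1 - a^*_{ij}$, the $\Tr^e_u$-constraint $(\mathbf A + \mathbf C)_{ij} \equiv -1 \pmod{p^e}$ uniquely forces $\mathbf A = \mathbf A^*$, yielding
\[
\Tr_S^e\bigl(F_*^e(g_1^{p^e-1}\cdots g_c^{p^e-1} \cdot u^{\mathbf C} a\, h_l)\bigr) = C_{\mathbf A^*}\, \Tr_R^e\bigl(F_*^e(a\, h^{\mathbf b^*}\, h_l)\bigr) \in \tau(\omega_{S/J}),
\]
and dividing by the unit $C_{\mathbf A^*}$ gives the reverse inclusion.

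The main obstacle is the combinatorial construction of a 2-sparse $\mathbf A^*$ with prescribed column sums: this is where the hypothesis $\mu \leq c+1$ is genuinely essential, since for $\mu \geq c+2$ there are column-sum vectors $\mathbf b^*$ admitting no matrix with unit multinomial (for instance $c=2,\, p=2,\, e=2,\, \mu=4,\, \mathbf b^*=(1,1,1,3)$ has all candidate matrices with row sums~$3$ and $C_{\mathbf A}\equiv 0\pmod 2$). The remaining steps---the linkage identification of $\omega_{S/J}$, Fedder's formula, selecting the common test element $a$, and the enlargement-of-generators reduction---are standard bookkeeping.
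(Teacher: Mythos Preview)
Your proof is correct and follows essentially the same route as the paper: you identify $\omega_{S/J}$ with $I\cdot(S/J)$ via linkage, compute $\Phi_{S/J}^e$ through Fedder's formula and Lemma~\ref{lemma--restriction of trace}, expand $g_1^{p^e-1}\cdots g_c^{p^e-1}$ multinomially and separate the $u$- and $R$-traces for part~(1), and for part~(2) reduce (via the change-of-generators trick of Lemma~\ref{lem: independent of generators for test submodule}) to the case where the reduction generators are among the $f_j$, then build a $2$-sparse exponent matrix with prescribed column sums---exactly Lemma~\ref{claim finding exponents}---to force a unit multinomial coefficient. The only cosmetic difference is that you phrase Lemma~\ref{claim finding exponents} as a weighted-flow problem on a graph with $\mu$ vertices and $c$ edges rather than proving it by the paper's direct induction on $r$; the content is identical, and your counterexample at $\mu=c+2$ nicely illustrates why the hypothesis is sharp.
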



Our proof of Theorem \ref{theorem--parameter test submodules under generic linkage}(2) requires considering different sets of generators of $I$. A priori, a generic link $(S,J)$ depends on the choice of generators. The following lemma guarantees that the statement in Theorem \ref{theorem--parameter test submodules under generic linkage}(2) is independent of the choice of generators of $I$. Its proof follows the same line as the proof of \cite[Proposition 2.11]{HunekeUlrichThestructureoflinkage}.
\begin{lemma}
\label{lem: independent of generators for test submodule}
Let $\Lambda_1$ and $\Lambda_2$ be two sets of generators of $I$ and let $(S_1,J_1)$ and $(S_2,J_2)$ be generic links of $I$ with respect to $\Lambda_1$ and $\Lambda_2$ respectively. Then $\tau(\omega_{S_1/J_1})\supseteq \tau(I^c)\cdot(S_1/J_1)$ iff $\tau(\omega_{S_2/J_2})\supseteq \tau(I^c)\cdot(S_2/J_2)$.
\end{lemma}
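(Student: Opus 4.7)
The plan is to reduce the lemma to the case where $\Lambda_2$ is obtained from $\Lambda_1$ by appending a single redundant generator, then identify the corresponding generic links through an explicit $R$-algebra automorphism that exhibits one as a polynomial extension of the other. By factoring any change of generators through $\Lambda_1 \cup \Lambda_2$, it suffices to treat the case $\Lambda_1 = \{f_1,\dots,f_r\}$ and $\Lambda_2 = \{f_1,\dots,f_r,f_{r+1}\}$ with $f_{r+1} = \sum_{j=1}^r a_j f_j$ for some $a_j \in R$; an inductive argument on the number of added generators then handles the general case.

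In this reduced setting, write $S_2 = R[u_{ij}, v_i]_{1\le j\le r,\,1\le i\le c} = S_1[v_1,\dots,v_c]$, so that the linkage relations are $g_i^{(2)} = \sum_{j=1}^r u_{ij} f_j + v_i f_{r+1}$ and $g_i^{(1)} = \sum_{j=1}^r u_{ij} f_j$. Consider the $R$-algebra automorphism $\phi: S_2 \to S_2$ defined by $\phi(u_{ij}) = u_{ij} - a_j v_i$ and $\phi(v_i) = v_i$; direct substitution gives $\phi(g_i^{(2)}) = g_i^{(1)}$. Since $\phi$ fixes $R$, it also fixes $IS_2$, and using the flatness of $S_1 \hookrightarrow S_2$ to commute colons with extension one obtains $\phi(J_2) = (g_1^{(1)},\dots,g_c^{(1)})S_2 :_{S_2} IS_2 = J_1 S_2$. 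Consequently, there is a ring isomorphism $S_2/J_2 \cong (S_1/J_1)[v_1,\dots,v_c]$.

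Under this isomorphism, $\tau(I^c) \cdot (S_2/J_2)$ corresponds to $\bigl(\tau(I^c)\cdot(S_1/J_1)\bigr)[v_1,\dots,v_c]$, since $\tau(I^c)$ is an ideal of $R$ that is unaffected by extending the auxiliary variables. The remaining ingredient is that the parameter test submodule is likewise stable under polynomial extension, namely $\tau(\omega_{A[t_1,\dots,t_n]}) = \tau(\omega_A) \cdot A[t_1,\dots,t_n]$ for any $F$-finite integral domain $A$ essentially of finite type over a perfect field. Via Remark \ref{remark--description of parameter test submodule}, this reduces to checking that the Frobenius trace on a polynomial extension decomposes compatibly, so that a sufficiently high power of an element $a \in A$ whose localization $A_a$ is regular continues to witness the formula $\tau(\omega) = \sum_e \Phi^e(F^e_*(a \cdot \omega))$ after extension. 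With all three identifications in hand, the containments $\tau(\omega_{S_i/J_i}) \supseteq \tau(I^c)(S_i/J_i)$ for $i=1,2$ are manifestly equivalent.

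The main obstacle is the last ingredient: verifying that the parameter test submodule commutes with polynomial extensions. While this is a folklore fact that follows from the trace-map description, it requires a careful decomposition of $\Phi^e$ over the polynomial variables, together with a check that the witnessing element in Remark \ref{remark--description of parameter test submodule} transfers appropriately. The reduction step and the automorphism $\phi$, by contrast, are essentially bookkeeping modeled on the argument of \cite[Proposition 2.11]{HunekeUlrichThestructureoflinkage}.
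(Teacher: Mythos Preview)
Your proposal is correct and follows essentially the same route as the paper. Both arguments reduce to the case $\Lambda_2=\Lambda_1\cup\{f_{r+1}\}$, exhibit an $R$-algebra automorphism of $S_2$ carrying $J_2$ to $J_1S_2$ (your $\phi$ is precisely the inverse of the paper's $\varphi$), and then invoke the compatibility of the parameter test submodule with polynomial (equivalently, faithfully flat smooth) extension---a step you flag as the main obstacle and the paper likewise dismisses as ``straightforward to check.''
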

\begin{proof}
By considering $\Lambda_1\cup \Lambda_2$, we can assume that $\Lambda_1\subseteq \Lambda_2$. By induction on the difference between the cardinality of $\Lambda_1$ and $\Lambda_2$, we may assume that $\Lambda_2$ has one more element than $\Lambda_1$, {\it i.e.} we may assume that $\Lambda_1=\{f_1,\dots,f_r\}$ and $\Lambda_2=\Lambda_1\cup \{f_{r+1}\}$.

Denote the height of $I$ by $c$. Let $\{u_{ij}\mid 1\leq i\leq c,1\leq j\leq r+1\}$ be indeterminates over $R$. Set $S_1=R[u_{ij}]_{1\leq i\leq c, 1\leq j\leq r}$ and $S_2=R[u_{ij}]_{1\leq i\leq c, 1\leq j\leq r+1}$. For $i=1,\ldots ,c$, set
\[g_i:=u_{i1}f_1+\cdots +u_{ir}f_{r}\] and \[h_i:=u_{i1}f_1+\cdots u_{i,r+1}f_{r+1}.\]
Then $J_1=((g_1,\ldots, g_c):_S IS)$ is the first generic link of $I$ with respect to $\Lambda_1$ and $J_2=((h_1,\ldots,h_c):_{S_2}IS_2)$ is the first generic link of $I$ with respect to $\Lambda_2$.

It is clear that $S_2=S_1[u_{1,r+1},\ldots, u_{c,r+1}]$. Since $f_{r+1}\in I$, we must have that $f_{r+1}=\sum_{j=1}^r a_jf_j$ for some $a_j\in R$.  Let $\varphi:S_2\to S_2$ be the automorphism given by the linear change of variables \[u_{ij}\mapsto u_{ij}+u_{i,r+1}a_j\] for $1\leq i\leq c$ and $1\leq j \leq r$ and \[u_{i,r+1}\mapsto u_{i,r+1}\] for $1 \leq i \leq c$.

We claim that $\varphi(J_1S_2)=J_2$ and we reason as follows. For $i=1,\ldots, c$, we have that
\[\varphi(g_i)=\sum_{j=1}^r(u_{ij}+u_{i,r+1}a_j)f_j=\sum_{j=1}^ru_{ij}f_j+u_{i,r+1}\sum_{j=1}^ra_jf_j=\sum_{j=1}^ru_{ij}f_j+u_{i,r+1}f_{r+1}=h_i.\]
Now since $S_1\hookrightarrow S_2$ is a faithfully flat extension, we have that
\[J_1S_2=((g_1,\ldots,g_c):_{S_1} IS_1)S_2=((g_1,\ldots,g_c)S_2:_{S_2} IS_2),\]
 and hence
\[\varphi(J_1S_2) =\varphi((g_1,\ldots, g_c)S_2:_{S_2} IS_2)
=(\varphi(g_1,\ldots, g_c)S_2:_{S_2} \varphi(IS_2))
=((h_1,\ldots,h_c):_{S_2} IS_2)
=J_2.\]
Let $S^{\varphi}_2$ denote the $S_1$-algebra that is the same as $S_2$ as a ring and whose $S_1$-module structure is induced by $S_1\hookrightarrow S_2\xrightarrow{\varphi}S_2$. Then we have shown that $J_1\otimes_{S_1}S^{\varphi}_2=J_2$ and hence $S_1/J_1\otimes_{S_1}S^{\varphi}_2=S_2/J_2$. Combining Remarks \ref{rmk: trace does not change under polynomial ext} and \ref{remark--description of parameter test submodule}, one can check that
\[\tau(\omega_{S_1/J_1})\otimes_{S_1}S^{\varphi}_2=\tau(\omega_{S_1/J_1}\otimes_{S_1}S^{\varphi}_2)\]
where the right hand side is precisely $\tau(\omega_{S_2/J_2})$. Our lemma follows immediately since $S^{\varphi}_2$ is faithfully flat over $S_1$.
\end{proof}

The following lemma is also needed in the proof of Theorem \ref{theorem--parameter test submodules under generic linkage}.

\begin{lemma}
\label{claim finding exponents}
Let $c,r$ be positive integers such that $c = r$ or $c = r-1$.  Let $\beta =(\beta_1,\dots,\beta_r)$ be an element of $\bN^r$, where $\bN$ is the set of non-negative integers. Assume $\sum_i \beta_i = c(p^e-1)$. Then there exist $c$ elements $\alpha_1,...,\alpha_c$ in $\bN^r$ such that:
\begin{enumerate}
\item each $\alpha_i$ has at most two nonzero entries;
\item the sum of the entries of each $\alpha_i$ is $p^e-1$;
\item $\beta_j = \sum_i \alpha_{ij}$, where $\alpha_i=(\alpha_{i1},\dots,\alpha_{ir})$.
\end{enumerate}
\end{lemma}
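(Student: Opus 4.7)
The plan is to prove, by strong induction on $c$, the slightly stronger statement: for $\beta\in\mathbb{N}^r$ with $\sum_j\beta_j=cq$ (where $q:=p^e-1$) and $k(\beta):=\#\{j:\beta_j>0\}\le c+1$, vectors $\alpha_1,\dots,\alpha_c$ as claimed exist. This strengthening applies to our setting since the hypothesis $c\in\{r-1,r\}$ forces $k(\beta)\le r\le c+1$, and working with $k(\beta)$ in place of $r$ is what allows the induction to close, since peeling off an $\alpha_i$ may shrink $k$ but never changes the ambient $r$. The base case $c=1$ is immediate: $\beta$ itself has at most two nonzero entries with sum $q$, so $\alpha_1:=\beta$ works.

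For the inductive step $c\ge 2$, I would split on $k(\beta)$. If $k(\beta)\le c$, the pigeonhole principle produces some $j_0$ with $\beta_{j_0}\ge q$; taking $\alpha_1:=q\,e_{j_0}$ leaves a remainder $\beta'$ with sum $(c-1)q$ and $k(\beta')\le k(\beta)\le c=(c-1)+1$, so the induction hypothesis applies. If instead $k(\beta)=c+1$, a single-entry peel may fail to shrink $k$ (when $\beta_{j_0}>q$), so I must instead pair two entries. The key combinatorial claim here is that some pair of positive entries $\beta_{j_1},\beta_{j_2}$ satisfies $\beta_{j_1}+\beta_{j_2}\ge q$; I would prove it by contradiction via double counting. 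If every pair of positive entries summed to less than $q$, then
\[
(k-1)\cdot cq \;=\; \sum_{\substack{i<j\\ \beta_i,\beta_j>0}}(\beta_i+\beta_j) \;<\; \binom{k}{2}q,
\]
which rearranges to $c<k/2$; substituting $k=c+1$ gives $c<1$, contradicting $c\ge 2$. Ordering the pair so that $\beta_{j_1}\ge\beta_{j_2}>0$ and setting $\alpha_1:=(q-\beta_{j_2})\,e_{j_1}+\beta_{j_2}\,e_{j_2}$ then gives a valid vector: the support has size at most two, the entry sum is $q$, and the inequality $q-\beta_{j_2}\le\beta_{j_1}$ ensures nonnegativity. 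The remainder satisfies $\beta'_{j_2}=0$ and $\beta'_{j_1}=\beta_{j_1}+\beta_{j_2}-q\ge 0$, so $k(\beta')\le k-1=c$ and the induction applies.

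The main obstacle is the pairing step in the $k(\beta)=c+1$ case: the invariant is tight there, so I cannot afford to leave $k$ unchanged, and the chosen $\alpha_1$ must exhaust one of the two entries it touches. The double-counting argument above is what makes this work and is really the heart of the lemma; once it is in hand, everything else is routine bookkeeping of the invariant $k(\beta')\le (c-1)+1$ after peeling.
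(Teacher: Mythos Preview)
Your overall scheme—induct and peel off one $\alpha_i$ at a time—matches the paper's argument, which inducts on $r$ by zeroing out one coordinate per step. But there is a real gap in your pairing step. You set $\alpha_1=(q-\beta_{j_2})\,e_{j_1}+\beta_{j_2}\,e_{j_2}$ and assert it lies in $\mathbb{N}^r$, yet you never check that $q-\beta_{j_2}\ge 0$; the inequality $q-\beta_{j_2}\le\beta_{j_1}$ you invoke only guarantees that the \emph{remainder} $\beta'_{j_1}$ is nonnegative, not that the entry of $\alpha_1$ at $j_1$ is. Your double-counting produces \emph{some} pair of positive entries with sum at least $q$, but nothing prevents both members of that pair from exceeding $q$. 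Concretely, take $c=3$, $q=10$, and positive entries $11,11,7,1$ (so $k=c+1=4$ and the total is $30=cq$): the pair $(11,11)$ satisfies your hypothesis but gives $q-\beta_{j_2}=-1$.

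The repair is easy and is exactly what the paper does: first fix $j_2$ to be a coordinate with $0<\beta_{j_2}<q$—one exists because the $c+1$ positive entries average $cq/(c+1)<q$—and only then look for $j_1$. The remaining $c$ positive entries sum to $cq-\beta_{j_2}$, so by pigeonhole some $\beta_{j_1}\ge (cq-\beta_{j_2})/c = q-\beta_{j_2}/c\ge q-\beta_{j_2}$, and now $\alpha_1$ really lies in $\mathbb{N}^r$. With this correction the double-counting becomes unnecessary, and your two cases $k\le c$ and $k=c+1$ collapse into the paper's single inductive step: always zero out a coordinate strictly below $q$, handling the all-entries-equal-to-$q$ case separately.
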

\begin{proof}
We will induce on $r$. If $c=r=1$, then $\beta = (p^e-1)$ and we let $\alpha_1 = \beta$.  If $c=1, r=2$, we have $\beta = (\beta_1, \beta_2)$ where $\beta_1 + \beta_2 = p^e-1$ and we can let $\alpha_1= (\beta_1,\beta_2)$ and again (1)-(3) hold.

If $c=r$ and $\beta_1=\cdots=\beta_c=p^e-1$, then we can set $\alpha_i$ to be the vector with $p^e-1$ at $i$-th spot and $0$ elsewhere. Otherwise, there must be a $\beta_i < p^e-1$. Without loss of generality, we assume that $\beta_r<p^e-1$.

We claim that $\beta_{j} \geq p^e-1-\beta_r$ for some $j$ between $1$ and $r-1$, and we reason as follows. If $c=r$, then there must be a $j$ such that $\beta_j > p^e-1$, and hence $\beta_{j} \geq p^e-1-\beta_r$. Now assume that $c = r-1$. Suppose $\beta_{i} < p^e-1-\beta_r$ for all $i\leq r-1$, as then we would have:
\[
\sum_{i=1}^r\beta_i < (r-1)(p^e-1-\beta_r)+\beta_r \leq (r-2)(p^e-1-\beta_r)+(p^e-1) \leq (r-1)(p^e-1)=c(p^e-1)
\]
which contradicts the assumption that $\sum_{i=1}^r\beta_i=c(p^e-1)$. So, there is a $j$ between $1$ and $r-1$ such that $\beta_{j} \geq p^e-1-\beta_r$.

Set $\alpha_c := (0,\dots,0,p^e-1-\beta_r,0,\dots,\beta_r)$ where $p^e-1-\beta_r$ appears in the $j$-th spot. Consider \[(\beta_1,\dots,\beta_{j-1},\beta_j-(p^e-1-\beta_r),\beta_{j+1},\dots,\beta_{r-1}).\] This is an element of $\bN^{r-1}$ such that the sum of its entries is $(c-1)(p^e-1)$. By our induction hypotheses, there are $\gamma_1,\dots,\gamma_{c-1} \in \bN^{r-1}$ that satisfy (1), (2), and (3). For $1 \le i \le c-1$, setting $\alpha_i$ be $\gamma_i$ with a 0 added to the end completes the proof of our lemma.
\end{proof}

\begin{proof}[Proof of Theorem \ref{theorem--parameter test submodules under generic linkage}]
By Remark \ref{remark--geometrically linked}, $J$ is a minimal prime of $(g_1,\dots,g_c)$. Hence once we identify
$$\omega_{S/J}=\Hom_{S/(g_1,\dots,g_c)}(S/J, {S/(g_1,\dots,g_c)})=((g_1,\dots,g_c):J)\cdot (S/J)=I\cdot (S/J),$$
we know from Lemma \ref{lemma--restriction of trace} that
$$\Phi_{S/J}^e(F_*^e(-))=\Phi_{S/(g_1,\dots,g_c)}^e(F_*^e(-))|_{\omega_{S/J}}=\Tr_S^e(F_*^e(g_1^{p^e-1}\cdots g_c^{p^e-1}\cdot -))|_{I\cdot (S/J)}.$$
Next we notice that for every $1\leq k\leq r$, $(S/J)_{f_k}\cong R_{f_k}[u_{ij}|j\neq k]$ is regular. Hence for $N\gg0$, $f_k^N$ is a test element for $S/J$. Thus by Remark \ref{remark--description of parameter test submodule}, we have:
\begin{equation}
\label{equation--formula for parameter test submodule}
\tau(\omega_{S/J})=\sum_{e\geq 0} \Phi_{S/J}^e(F^e_*(f_k^N\cdot \omega_{S/J}))= \sum_{e\geq 0}\Tr_S^e(F_*^e(g_1^{p^e-1}\cdots g_c^{p^e-1}\cdot f_k^N \cdot IS))\cdot (S/J)
\end{equation}
Since $f_k\in I$ and $R$ is regular, by Remark \ref{remark--choose test element}, for $N\gg0$ we also have:
\begin{eqnarray}
\label{equation--formula for test ideal of pair}
\tau(I^c)\cdot (S/J)&=&\sum_{e\geq 0}\Tr_R^e(F_*^e((f_1,\dots,f_r)^{c(p^e-1)}\cdot f_k^N\cdot R))\cdot (S/J)
\end{eqnarray}
When we expand $g_1^{p^e-1}\cdots g_c^{p^e-1}$, it is easy to see from (\ref{equation--formula for parameter test submodule}) that $\tau(\omega_{S/J})$ can be generated by elements of the form
\begin{equation}
\label{equation--expression of generator of parameter test submodule}
\Tr_S^e\left(F_*^e\left(\binom{p^e-1}{\alpha_{11},\dots,\alpha_{1r}}\cdots\binom{p^e-1}{\alpha_{c1},\dots,\alpha_{cr}}f_1^{\beta_1}f_2^{\beta_2}\cdots f_r^{\beta_r}\prod_{\stackrel{1\leq i\leq c}{1\leq j\leq r}} u_{ij}^{\alpha_{ij}}\cdot f_k^N\cdot s\cdot \prod_{\stackrel{1\leq i\leq c}{1\leq j\leq r}} u_{ij}^{\gamma_{ij}}\right)\right)
\end{equation}
where $0\leq \alpha_{ij}\leq p^e-1$, $\beta_j=\sum_{i=1}^c\alpha_{ij}$, $\sum_{j=1}^r\beta_j=c(p^e-1)$ and $s \in I$. By definition of the trace map, this is equal to
$$\prod_{\stackrel{1\leq i\leq c}{1\leq j\leq r}} u_{ij}^{\frac{\alpha_{ij}+\gamma_{ij}-(p^e-1)}{p^e}}\cdot \Tr_R^e\left(F_*^e\left(\binom{p^e-1}{\alpha_{11},\dots,\alpha_{1r}}\cdots\binom{p^e-1}{\alpha_{c1},\dots,\alpha_{cr}}f_1^{\beta_1}f_2^{\beta_2}\cdots f_r^{\beta_r}\cdot f_k^N\cdot s\right)\right)$$
where $\frac{\alpha_{ij}+\gamma_{ij}-(p^e-1)}{p^e}$ denotes 0 if $\alpha_{ij}+\gamma_{ij}\not\equiv -1$ mod $p^e$. But it is clear that this element is in $\tau(I^c)\cdot S$ by expression (\ref{equation--formula for test ideal of pair}). This proves (1).

Next we prove (2).  By Lemma \ref{lem: independent of generators for test submodule} we can assume that $\tilde{I}=(f_1,\ldots, f_{c+1})$ is a reduction of $I$ (the case that $I$ has a reduction generated by $c$ elements is similar).  Hence by the arguments above, we have that, for $1\leq k \leq c$ and $N\gg 0$,
\[\tau(I^c)\cdot(S/J)=\tau(\tilde{I}^c)\cdot (S/J)=\sum_{e\geq 0}\Tr_R^e(F_*^e((f_1,\dots,f_{c+1})^{c(p^e-1)}\cdot f_k^{N+1}\cdot R))\cdot (S/J)\]
Given a generator $f_1^{\beta_1}\cdots f_{c+1}^{\beta_{c+1}}$ of $ (f_1,\dots,f_{c+1})^{c(p^e-1)}$, we can find $\alpha_1,\ldots, \alpha_c\in \N^{c+1}$ satisfying the conclusion of Lemma \ref{claim finding exponents}.  Then
\[\prod_{\stackrel{1\leq i\leq c}{1\leq j\leq c+1}}(u_{ij}f_j)^{\alpha_{ij}}=\prod_{\stackrel{1\leq i\leq c}{1\leq j\leq c+1}}u_{ij}^{\alpha_{ij}}\prod_{1 \leq j \leq c+1} f_j^{\beta_j}\]
appears with coefficient $\binom{p^e-1}{\alpha_{11},\dots,\alpha_{1,c+1}}\cdots\binom{p^e-1}{\alpha_{c1},\dots,\alpha_{c,c+1}}$ in the product $g_1^{p^e-1}\cdots g_c^{p^e-1}$.  Because each multinomial coefficient $\binom{p^e-1}{\alpha_{i1},\dots,\alpha_{i,c+1}}=\binom{p^e-1}{\alpha_{ij_i}}$ for some $j_i$ by Lemma \ref{claim finding exponents} (1)-(2), they are nonzero in $k$.

Each $\alpha_{ij}$ is less than $p^e$, so let
\[s'=\left(\prod_{\stackrel{1\leq i\leq c}{1\leq j\leq c+1}} u_{ij}^{p^e-1-\alpha_{ij}}\right)\left(\prod_{\stackrel{1\leq i\leq c}{c+2\leq j\leq r}} u_{ij}^{p^e-1}\right).\]  Then $\Tr_S^e(F^e_*( -\cdot s'))$ sends $\prod_{1\leq l \leq n} x_l^{p^e-1}\prod_{\stackrel{1\leq i\leq c}{1\leq j\leq c+1}}u_{ij}^{\alpha_{ij}}$ to 1 and all other basis elements to 0.  Hence,
\begin{align*}
&\Tr_S^e(F^e_*(g_1^{p^e-1}\cdots g_c^{p^e-1}\cdot f_k^{N+1} \cdot s' \cdot R))\\
=&\Tr_S^e\left(F^e_\ast\left(\binom{p^e-1}{\alpha_{11},\dots,\alpha_{1,c+1}}\cdots\binom{p^e-1}{\alpha_{c1},\dots,\alpha_{c,c+1}}\cdot \prod_{\stackrel{1\leq i\leq c}{1\leq j\leq r}}u_{ij}^{p^e-1}\prod_{j=1}^{c+1} f_j^{\beta_j} \cdot f_k^{N+1} R\right)\right)\\
=&\Tr_R^e\left(F^e_*\left(\prod_{j=1}^{c+1} f_j^{\beta_j} \cdot f_k^{N+1}\cdot R\right)\right).
\end{align*}
In particular, \[\Tr_R^e\left(F^e_*\left(\prod_{j=1}^{c+1} f_j^{\beta_j} \cdot f_k^{N+1}\cdot R\right)\right)\cdot (S/J)=\Tr_S^e(F^e_*(g_1^{p^e-1}\cdots g_c^{p^e-1}\cdot f_k^N\cdot f_k s' R))\cdot (S/J)\subseteq \tau(\omega_{S/J})\] for every generator $\prod_{j=1}^{c+1} f_j^{\beta_j}$ of $(f_1,\ldots, f_{c+1})^{c(p^e-1)}$, where the second inclusion follows from expression (\ref{equation--formula for parameter test submodule}). Therefore we have
\begin{align*}
\tau(I^c)\cdot (S/J) &=\tau(\tilde{I}^c)\cdot (S/J)\\
&=\sum_{e\geq 0}\Tr_R^e(F_*^e((f_1,\dots,f_{c+1})^{c(p^e-1)}\cdot f_k^{N+1}\cdot R))\cdot (S/J)\\
&\subseteq \tau(\omega_{S/J}).\qedhere\end{align*}

\end{proof}



\begin{remark}
The proof of Theorem \ref{theorem--parameter test submodules under generic linkage} (2) requires the minimal reduction be generated by at most $c+1$ elements. If not, then we are not in the case of Lemma \ref{claim finding exponents} and it may be the case that there are always at least three nonzero entries in some $\alpha_i$. Consequently, multinomial coefficients must be taken into consideration.
\end{remark}


\begin{corollary}
\label{corollary--criterion for F-rationality}
With the notation as in Definition \ref{definition--generic linkage} and the assumptions as in Theorem \ref{theorem--parameter test submodules under generic linkage} (2), $\tau(\omega_{S/J})=\omega_{S/J}$ if and only if $\tau(I^c)=I$. In particular, $S/J$ has $F$-rational singularities if and only if $S/J$ is Cohen-Macaulay and $\tau(I^c)=I$.
\end{corollary}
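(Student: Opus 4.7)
The plan is to translate the equivalence into a statement about ideals in $S$ that can be detected by specializing the variables $u_{ij}$ to $0$. The proof of Theorem \ref{theorem--parameter test submodules under generic linkage} already supplies the identification $\omega_{S/J} = I \cdot (S/J) = (IS+J)/J$, while Theorem \ref{theorem--parameter test submodules under generic linkage}(2) gives $\tau(\omega_{S/J}) = \tau(I^c) \cdot (S/J) = (\tau(I^c)S + J)/J$. Consequently, $\tau(\omega_{S/J}) = \omega_{S/J}$ holds if and only if $\tau(I^c) S + J = IS + J$ as ideals of $S$, and the implication $\tau(I^c) = I \Rightarrow \tau(\omega_{S/J}) = \omega_{S/J}$ is then immediate.

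For the converse I would introduce the $R$-algebra homomorphism $\phi\colon S = R[u_{ij}] \to R$ sending every $u_{ij}$ to $0$. Since $\phi$ restricts to the identity on $R$, we have $\phi(IS) = I$ and $\phi(\tau(I^c)S) = \tau(I^c)$. The main step is to check that $\phi(J) = 0$: given $j \in J$ and $f \in I$, the defining condition $J = (g_1,\dots,g_c) : IS$ forces $jf \in (g_1,\dots,g_c)$, so
\[ \phi(j) \cdot f = \phi(jf) \in \phi\bigl((g_1,\dots,g_c)\bigr) = 0, \]
because each $g_i = \sum_j u_{ij} f_j$ lies in $\ker\phi$. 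Since $R$ is a domain and $I \neq 0$, this forces $\phi(j) = 0$, and hence $\phi(J) = 0$. Applying $\phi$ to the assumed equality $\tau(I^c)S + J = IS + J$ then yields $\tau(I^c) = I$, as desired.

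The ``in particular'' assertion follows formally: by Definition \ref{definition--parameter test submodule}, $S/J$ is $F$-rational if and only if $S/J$ is Cohen-Macaulay and $\tau(\omega_{S/J}) = \omega_{S/J}$, which by the first part of the corollary is in turn equivalent to $S/J$ being Cohen-Macaulay together with $\tau(I^c) = I$. I do not anticipate any serious obstacle here; the entire argument rests on the identification of $\omega_{S/J}$ recorded at the start of the proof of Theorem \ref{theorem--parameter test submodules under generic linkage}, together with the simple but essential observation that the colon-ideal description of $J$ forces $J$ to lie inside $\ker \phi$.
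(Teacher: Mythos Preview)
Your proof is correct and rests on the same underlying observation as the paper's: the ideal $J$ has no constant term in the $u_{ij}$, so the equality $\tau(I^c)S+J=IS+J$ can be pushed down to $R$. The paper argues this by contradiction---picking $f_1\notin\tau(I^c)$, writing $f_1=a+b$ with $a\in\tau(I^c)S$ and $b\in J$, and observing that $(f_1-a)f_1\in(g_1,\dots,g_c)$ is impossible for degree reasons in the $u_{ij}$---whereas you package the same degree observation as the vanishing $\phi(J)=0$ under the specialization $u_{ij}\mapsto 0$ and apply $\phi$ directly; your version is a bit cleaner and avoids invoking the containment $\tau(I^c)\subseteq I$.
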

\begin{proof}
If $\tau(I^c)=I$, then Theorem \ref{theorem--parameter test submodules under generic linkage} immediately implies $\tau(\omega_{S/J})=\omega_{S/J}$.

Conversely, assume that $\tau(I^c)\neq I$ and $\tau(\omega_{S/J})=\omega_{S/J}$. Since $\tau(I^c)$ is always contained in $I$ by (\ref{equation--containment of test ideal 1}), at least one of the generators of $I$ is not in $\tau(I^c)$, say $f_1$. From Theorem \ref{theorem--parameter test submodules under generic linkage}, we can see that $\tau(I^c)S+J=IS+J$; hence $f_1\in \tau(I^c)S+J$. Thus, there are elements $a\in \tau(I^c)S$ and $b\in J$ such that $f_1=a+b$. (Note that $b\neq 0$.) Then we have $f_1-a\in J$ which implies that $(f_1-a)f_1\in (g_1,\dots,g_c)$. However, this is impossible because of the degrees in the $u_{ij}$. This is a contradiction.

The last assertion is clear because $S/J$ is $F$-rational if and only if $S/J$ is Cohen-Macaulay and $\tau(\omega_{S/J})=\omega_{S/J}$.
\end{proof}

\begin{corollary}
With the notation as in Definition \ref{definition--generic linkage} and the assumptions as in Theorem \ref{theorem--parameter test submodules under generic linkage} (2), if the pair $(R, I^c)$ is $F$-pure and $R/I$ is Cohen-Macaulay, then $S/J$ is $F$-rational. In particular, if $R/I$ is an $F$-pure complete intersection, then $S/J$ is $F$-rational.
\end{corollary}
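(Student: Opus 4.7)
The plan is to reduce the first claim to Corollary \ref{corollary--criterion for F-rationality}, which under the hypotheses of Theorem \ref{theorem--parameter test submodules under generic linkage}(2) says that $S/J$ is $F$-rational if and only if $S/J$ is Cohen--Macaulay and $\tau(I^c) = I$. The test ideal equality is immediate: since $R = k[x_1,\ldots,x_n]$ is regular, it is in particular strongly $F$-regular, and together with the hypothesis that $(R, I^c)$ is $F$-pure, the last sentence of Lemma \ref{lemma--Ein's lemma in char p} delivers $\tau(I^c) = I$ in one step.

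The Cohen--Macaulayness of $S/J$ is where I would step outside the $F$-singularity framework and appeal to classical linkage theory. The sequence $g_1,\ldots,g_c$ is a regular sequence in the Gorenstein ring $S = R[u_{ij}]$ because $J \supseteq (g_1,\ldots,g_c)$ has height $c$, and $S/IS = (R/I)[u_{ij}]$ is Cohen--Macaulay since $R/I$ is. By Peskine--Szpiro's linkage theorem, $S/J$ is linked to the Cohen--Macaulay quotient $S/IS$ through this regular sequence and is therefore Cohen--Macaulay itself. Corollary \ref{corollary--criterion for F-rationality} then yields the $F$-rationality of $S/J$.

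For the ``In particular'' statement, suppose $R/I$ is an $F$-pure complete intersection with $I = (f_1,\ldots,f_c)$. Then $R/I$ is automatically Cohen--Macaulay, and $I$ is its own minimal reduction generated by $c$ elements, so the hypotheses of Theorem \ref{theorem--parameter test submodules under generic linkage}(2) are satisfied. The only remaining input for the first part of the corollary is that $(R, I^c)$ is $F$-pure, and this follows from Fedder's criterion: locally, the $F$-purity of $R/I$ is equivalent to $(f_1 \cdots f_c)^{p^e-1} \notin \mathfrak{m}^{[p^e]}$ for all $e > 0$, and this same element, which lies in $I^{c(p^e-1)}$, serves as the $d$ producing a trace-map splitting of $F^e_* dR \hookrightarrow F^e_* R$, witnessing the $F$-purity of the pair. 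The first part of the corollary then completes the argument. The main obstacle is really only the Cohen--Macaulayness of $S/J$, since it is not a statement about $F$-singularities but about classical linkage and requires citing an external linkage result; everything else follows cleanly from Ein's Lemma and the previously established machinery.
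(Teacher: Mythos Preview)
Your proof is correct and follows essentially the same route as the paper: Ein's Lemma gives $\tau(I^c)=I$, Corollary \ref{corollary--criterion for F-rationality} then yields $F$-rationality, and Fedder's criterion handles the complete intersection case. You actually improve on the paper's terse argument by making explicit the Cohen--Macaulayness of $S/J$ via Peskine--Szpiro, which the paper's proof silently assumes when invoking Corollary \ref{corollary--criterion for F-rationality}.
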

\begin{proof}
By Lemma \ref{lemma--Ein's lemma in char p}, $(R, I^c)$ is $F$-pure implies $\tau(I^c)=I$. The first statement thus follows from Corollary \ref{corollary--criterion for F-rationality}. Finally, it is well known that when $R/I$ is an $F$-pure complete intersection, the pair $(R, I^c)$ is $F$-pure. This follows from a Fedder type criterion (\cite[Lemma 3.9]{TakagiFsingularitiesofpairsandinversionofadjunction} and others).
\end{proof}

We can recover \cite[Corollary 3.4]{NiuSingularitiesofgenericlinkage} in the complete intersection and almost complete intersection cases.
\begin{corollary}
Let $I=(f_1,\dots,f_r)$ be an ideal of $\bC[x_1,\dots,x_n]$ and let $c$ be the codimension of $I$. Let $S$ and $J$ be in Definition \ref{definition--generic linkage}. Assume that $r\leq c+1$. Then $S/J$ has rational singularities if and only if $S/J$ is Cohen-Macaulay and $\scr{I}(I^c)=I$, where $\scr{I}(I^c)$ is the multiplier ideal of $I^c$.
\end{corollary}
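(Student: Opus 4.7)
The plan is to mirror the characteristic $p$ proof of Corollary \ref{corollary--criterion for F-rationality} in characteristic zero, substituting Niu's Theorem \ref{theorem: Niu}(1) for Theorem \ref{theorem--parameter test submodules under generic linkage}. Since $S/J$ is a complex variety, Kempf's criterion says that a Cohen--Macaulay $S/J$ has rational singularities if and only if $\omega_{S/J}^{GR}=\omega_{S/J}$. As in the opening of the proof of Theorem \ref{theorem--parameter test submodules under generic linkage}, Remark \ref{remark--geometrically linked} lets us identify $\omega_{S/J}=((g_1,\dots,g_c):J)\cdot(S/J)=I\cdot(S/J)$, while Niu's Theorem \ref{theorem: Niu}(1) gives $\omega_{S/J}^{GR}\cong \scr{I}(R,I^c)\cdot(S/J)$. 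Hence, once Cohen--Macaulayness is assumed, the question of rational singularities of $S/J$ becomes equivalent to the equality of ideals $\scr{I}(I^c)S+J=IS+J$ of $S$.

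One direction is then trivial: if $\scr{I}(I^c)=I$, the displayed equality holds. For the converse I would run the characteristic-zero analogue of the degree argument in the proof of Corollary \ref{corollary--criterion for F-rationality}. Assume $\scr{I}(I^c)\neq I$; the characteristic-zero Ein's Lemma (as used in \cite{NiuSingularitiesofgenericlinkage}) gives $\scr{I}(I^c)\subseteq I$, so some generator $f_k$ lies in $I\setminus\scr{I}(I^c)$. If $\scr{I}(I^c)S+J=IS+J$, then we can write $f_k=a+b$ with $a\in\scr{I}(I^c)S$ and $b\in J$, and $b\neq 0$ (else $f_k=a$ would force its $u_{ij}$-degree-zero part $f_k$ to lie in $\scr{I}(I^c)$). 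Then $b\cdot f_k=(f_k-a)f_k$ lies in $J\cap IS=(g_1,\dots,g_c)$ by Remark \ref{remark--geometrically linked}. Since each $g_i$ is homogeneous of degree one in the $u_{ij}$, the ideal $(g_1,\dots,g_c)$ contains no nonzero element of $u_{ij}$-degree zero; extracting the degree-zero components would then force $f_k$ itself into $\scr{I}(I^c)$, a contradiction.

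The main obstacle is simply to assemble the ingredients---Niu's theorem, Kempf's criterion, the identification of $\omega_{S/J}$, and the Ein-type containment $\scr{I}(I^c)\subseteq I$---correctly in characteristic zero; once in place, the core degree argument transports verbatim from the positive-characteristic proof of Corollary \ref{corollary--criterion for F-rationality}. The hypothesis $r\leq c+1$ is inherited from the corresponding hypothesis of Corollary \ref{corollary--criterion for F-rationality} and corresponds to $I$ being a complete intersection or almost complete intersection; as a remark, an alternative route is reduction modulo $p$ applied directly to that corollary, using the standard correspondences between rational and $F$-rational singularities and between multiplier and test ideals, which would yield the statement in the same generality.
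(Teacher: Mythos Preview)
Your main argument is correct, but it is not the route the paper takes. The paper's proof is precisely the ``alternative route'' you sketch at the end: reduction modulo $p$. Concretely, the paper invokes the equivalence between rational and $F$-rational singularities (Smith, Hara), observes that $J_p$ is a generic link of $I_p$ for $p\gg 0$, applies Corollary \ref{corollary--criterion for F-rationality} to obtain that $(S/J)_p$ is $F$-rational if and only if it is Cohen--Macaulay and $\tau(I_p^c)=I_p$, and then uses Hara--Yoshida's theorem $\scr{I}(I^c)_p=\tau(I_p^c)$ for $p\gg 0$ to translate back.

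Your primary approach---Kempf's criterion plus Niu's Theorem \ref{theorem: Niu}(1) plus the degree argument---is essentially Niu's own characteristic-zero argument. Note that it never uses the hypothesis $r\le c+1$: Niu's theorem has no such restriction, and neither do Kempf's criterion, the identification $\omega_{S/J}=I\cdot(S/J)$, or the degree computation. So your main route actually establishes Niu's result in full generality, not merely the (almost) complete intersection case. The reason the paper instead runs the reduction-mod-$p$ argument is contextual: the point of this corollary is to demonstrate that the paper's characteristic-$p$ machinery (Theorem \ref{theorem--parameter test submodules under generic linkage} and Corollary \ref{corollary--criterion for F-rationality}) recovers Niu's corollary, and the restriction $r\le c+1$ is exactly the range in which that machinery applies. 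Your main approach is logically sound but bypasses the paper's results entirely; your alternative route is what the paper actually does.
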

\begin{proof}
By \cite{SmithFRatImpliesRat} and \cite{HaraRatImpliesFRat}, $S/J$ has rational singularities if and only if its reduction $(S/J)_p$ is $F$-rational for all $p\gg 0$. It is easy to see that, for $p\gg 0$, the reduction $J_p$ of $J$ is a generic link of the reduction $I_p$ of $I$. Hence, $S/J$ has rational singularities if and only if $(S/J)_p$ is Cohen-Macaulay and $\tau(I^c_p)=I_p$ for $p\gg 0$ by Corollary \ref{corollary--criterion for F-rationality}. On the other hand, it was proved in \cite{HaraYoshidaGeneralizedTestIdeals} that $\scr{I}(I^c)_p=\tau(I^c_p)$ for all $p\gg 0$. Therefore, we have $S/J$ has rational singularities if and only if $(S/J)_p$ is Cohen-Macaulay and $\scr{I}(I^c)_p=I_p$ for $p\gg 0$. This completes the proof.
\end{proof}

\section{Behavior of $F$-pure threshold under generic linkage}
\label{section: FPT under linkage}
In this section we investigate behaviors of $F$-pure thresholds under generic linkages. We begin with an easy lemma.

\begin{lemma}
\label{lem: independent of generators for FPT}
Let $R=k[x_1,\dots,x_n]$ be a polynomial ring over a perfect field of characteristic $p$ and $I$ be an equi-dimensional and unmixed ideal of $R$. Let $\Lambda_1$ and $\Lambda_2$ be 2 sets of generators of $I$ and let $(S_i,J_i)$ be the generic link with respect to $\Lambda_i$ (i=1,2). Then \[\fp_{S_1}(J_1)=\fp_{S_2}(J_2).\]
\end{lemma}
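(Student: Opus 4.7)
Proof proposal for Lemma \ref{lem: independent of generators for FPT}:

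The plan is to mimic, almost verbatim, the argument used in the proof of Lemma \ref{lem: independent of generators for test submodule}. First I would reduce to a minimal case. Replacing $\Lambda_1$ and $\Lambda_2$ by $\Lambda_1 \cup \Lambda_2$ (and comparing each with the union), we may assume $\Lambda_1 \subseteq \Lambda_2$; then by induction on $|\Lambda_2| - |\Lambda_1|$, it suffices to treat the case $\Lambda_1 = \{f_1,\dots,f_r\}$ and $\Lambda_2 = \Lambda_1 \cup \{f_{r+1}\}$. Write $f_{r+1} = \sum_{j=1}^r a_j f_j$ with $a_j \in R$.

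Second, I would invoke exactly the same change of variables used before. Set $S_2 = S_1[u_{1,r+1},\dots,u_{c,r+1}]$ and let $\varphi \colon S_2 \to S_2$ be the $R$-algebra automorphism sending $u_{i,j} \mapsto u_{i,j} + u_{i,r+1} a_j$ for $1 \leq j \leq r$ and fixing $u_{i,r+1}$. The computation in the proof of Lemma \ref{lem: independent of generators for test submodule} shows $\varphi(J_1 S_2) = J_2$.

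Third, I would chain two equalities:
\begin{enumerate}
\item $\fp_{S_1}(J_1) = \fp_{S_2}(J_1 S_2)$, because $S_1 \hookrightarrow S_2$ is a polynomial ring extension (so faithfully flat with regular fibers), and the $F$-pure threshold is preserved under such an extension: any $e$-th Frobenius splitting $\phi \colon F^e_\ast S_1 \to S_1$ sending some element of $F^e_\ast(J_1^{\lfloor t(p^e-1)\rfloor})$ to $1$ extends to $\phi \otimes_{S_1} S_2$, showing one inequality; for the reverse, a splitting over $S_2$ can be restricted/specialized (e.g.\ by setting the new variables equal to $0$, which corresponds to a retraction $S_2 \twoheadrightarrow S_1$) to yield one over $S_1$.
\item $\fp_{S_2}(J_1 S_2) = \fp_{S_2}(J_2)$, because $\varphi$ is an automorphism of $S_2$ sending $J_1 S_2$ to $J_2$, and all $F$-invariants are preserved under automorphism (the defining splitting conditions pull back bijectively through $\varphi$).
\end{enumerate}
Combining (i) and (ii) gives $\fp_{S_1}(J_1) = \fp_{S_2}(J_2)$.

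The only non-routine step is (i), i.e.\ ensuring the equality of $F$-pure thresholds under polynomial extension. This is essentially standard — for instance it can be deduced from the fact that test ideals commute with smooth base change, so $\tau(S_2,(J_1 S_2)^t) = \tau(S_1, J_1^t) \otimes_{S_1} S_2$, and $\fp$ is determined by the $t$ at which the test ideal becomes proper. Assuming this, the whole proof is a near-carbon copy of Lemma \ref{lem: independent of generators for test submodule}, with \emph{$F$-pure threshold} in place of the parameter test submodule statement.
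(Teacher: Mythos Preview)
Your proposal is correct and follows essentially the same route as the paper. The paper reduces to $\Lambda_2=\Lambda_1\cup\{f_{r+1}\}$, invokes the same automorphism $\varphi$ from Lemma~\ref{lem: independent of generators for test submodule}, and then records the single line $\tau(J_1^t)\otimes_{S_1}S_2^{\varphi}=\tau(J_2^t)$ for all $t\geq 0$; this encapsulates exactly your steps (i) and (ii) together (commutation of test ideals with the polynomial extension $S_1\hookrightarrow S_2$, followed by transport along the automorphism $\varphi$), and the equality of $F$-pure thresholds follows.
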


\begin{proof}
As in the proof of Lemma \ref{lem: independent of generators for test submodule}, we can assume that $\Lambda_1=\{f_1,\dots,f_r\}$ and $\Lambda_2=\{f_1,\dots,f_r,f_{r+1}\}$. Let $\varphi$ and $S^{\varphi}_2$ be the same as in the proof of Lemma \ref{lem: independent of generators for test submodule}. It is straightforward to check that
\[\tau(J^t_1)\otimes_{S_1}S^{\varphi}_2=\tau(J^t_2)\]
for each nonnegative real number $t$. Our lemma follows immediately.
\end{proof}

\begin{remark}
\label{remark: index of perfect fields}
Let $k\subseteq K$ be an extension of perfect fields and let $R=k[x_1,\dots,x_n]$ and $T=K[x_1,\dots,x_n]$. Since $\Hom_{R}(R^{1/p^e},R)$ and $\Hom_{T}(T^{1/p^e},T)$ are generated by the same projection, we have $\tau_R(I^t)=\tau_{T}((IT)^t)$ ({\it c.f.} \cite[Remark 2.18]{BlickleMustataSmithDiscretenessRationalityF-Thresholds}).
\end{remark}

\begin{theorem}
\label{theorem: FPT lower bound on g's}
Let $R=k[x_1,\dots,x_n]$ be a polynomial ring over a perfect field of characteristic $p$ and $I$ be an equi-dimensional and unmixed ideal of height $c$ in $R$. Assume that $I=(f_1,\ldots,f_s)$ and that $I$ has a reduction $\tilde{I}$ generated by $r$ elements. Let $S=R[u_{ij}]_{1 \le i \le c, 1 \le j \le s}$ be a polynomial ring over $R$. For $1 \le i \le c$, let \[g_i=u_{i1}f_1+u_{i2}f_2+\ldots+u_{is}f_s.\] Then $\fp_S(g_1,\ldots, g_c)\geq \frac{c}{r}\fp_{R}(I)$.
\end{theorem}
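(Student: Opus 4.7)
The strategy is to show that for every $t<\fp_R(I)$ the pair $(S,(g_1,\ldots,g_c)^{(c/r)t})$ is $F$-pure; the bound then follows by letting $t\to\fp_R(I)$. I first reduce to the case $s=r$, where $I=\tilde I=(f_1,\ldots,f_r)$ is itself generated by the $r$ elements of a minimal reduction and $g_i=\sum_{j=1}^r u_{ij}f_j$. This uses two ingredients: the equality $\fp_R(I)=\fp_R(\tilde I)$, which comes from $\tau(I^t)=\tau(\tilde I^t)$ (the lemma at the end of Section~\ref{section: basics on test ideals}); and an analog of Lemma~\ref{lem: independent of generators for FPT} for the ideal $(g_1,\ldots,g_c)$ itself, whose change-of-variables proof goes through verbatim since the extra generators of $I$ over $\tilde I$ lie in $I$.

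For $t<\fp_R(I)$ and $e\gg 0$, since $(c/r)t\le t<\fp_R(I)$, Fedder's criterion applied to $(R,I^{(c/r)t})$ supplies a multi-index $\beta^*\in\mathbb{N}^r$ with $|\beta^*|=\lfloor(c/r)t(p^e-1)\rfloor=:N$ and a monomial $x^{\gamma^*}$ (each $\gamma^*_l<p^e$) appearing in $\prod_j f_j^{\beta^*_j}$ with nonzero coefficient in $k$: this is the $R$-side witness. Setting $a:=\lceil t(p^e-1)/r\rceil$, so that $ca\ge N$, I take
\[
D:=g_1^a g_2^a\cdots g_c^a\in (g_1,\ldots,g_c)^{ca}\subseteq (g_1,\ldots,g_c)^N,
\]
and expand via the multinomial formula as
\[
D=\sum_{\alpha}\Big(\prod_{i=1}^c\binom{a}{\alpha_{i1},\ldots,\alpha_{ir}}\Big)\prod_{i,j}u_{ij}^{\alpha_{ij}}\prod_{j=1}^r f_j^{\sum_i\alpha_{ij}},
\]
the sum ranging over nonnegative integer matrices $\alpha=(\alpha_{ij})$ with row sums $a$. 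The combinatorial core is to exhibit a specific matrix $\alpha^*=(\alpha^*_{ij})$ with (i) all entries at most $p^e-1$, (ii) column sums equal to $\beta^*$, and (iii) each row multinomial coefficient $\binom{a}{\alpha^*_{i1},\ldots,\alpha^*_{ir}}$ nonzero in $k$. Once $\alpha^*$ is constructed, choosing
\[
Q:=\prod_{i,j}u_{ij}^{p^e-1-\alpha^*_{ij}}\cdot\prod_l x_l^{p^e-1-\gamma^*_l}
\]
and applying the tensor-product trace $\Tr^e_S=\Tr^e_{k[u]}\otimes\Tr^e_R$ annihilates every term of $F^e_*(DQ)$ except the one indexed by $(\alpha^*,\gamma^*)$, producing a nonzero element of $k\subseteq S$. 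This witnesses the $F$-purity of $(S,(g_1,\ldots,g_c)^{N/(p^e-1)})$, and letting $e\to\infty$ yields $\fp_S(g_1,\ldots,g_c)\ge (c/r)t$.

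The main obstacle is step (iii). Since $a\le p^e-1$ (up to a negligible ceiling slop, guaranteed by $\fp_R(I)\le r$), its base-$p$ digits are all at most $p-1$, and by the multinomial version of Lucas's theorem the row multinomial is nonzero in $k$ provided the digits of the $\alpha^*_{ij}$ sum to the digits of $a$ position-by-position without carries. Building $\alpha^*$ thus becomes a digit-level transportation problem matching row sums $a$ with prescribed column sums $\beta^*$; when $r\in\{c,c+1\}$ this is exactly what Lemma~\ref{claim finding exponents} does, and the general $r\ge c$ case requires a careful digit-by-digit distribution. The factor $c/r$ in the claimed bound arises precisely here: the row sum $a\approx t(p^e-1)/r$ is the largest that can be reliably distributed across $r$ columns with bounded entries and nonvanishing row multinomials modulo $p$, so the total extractible $f$-degree $ca\approx(c/r)t(p^e-1)$, rather than the full $t(p^e-1)$, controls the $F$-pure threshold transferred to $S$.
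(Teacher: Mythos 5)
Your overall strategy---start from an $R$-side Fedder witness, push it into $S$ by a suitable projection on the $u$-variables---is in the right spirit and shares the same skeleton as the paper's proof. But there are two concrete problems, one a bookkeeping inconsistency and one a genuine gap that you yourself flag and never close.

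First, the bookkeeping: you want a matrix $\alpha^*$ with row sums equal to $a=\lceil t(p^e-1)/r\rceil$ (forced by requiring $\binom{a}{\alpha^*_{i1},\ldots,\alpha^*_{ir}}$ to make sense) and column sums equal to $\beta^*$, where $|\beta^*|=N=\lfloor(c/r)t(p^e-1)\rfloor$. Such a matrix exists only if $ca=N$, but you only arranged $ca\ge N$; the two quantities generally differ. Second and more seriously, the ``main obstacle'' you identify is a real obstacle: by fixing equal exponents $g_1^a\cdots g_c^a$, every term in the expansion of $D$ picks up a product of $c$ multinomial coefficients $\binom{a}{\alpha_{i1},\ldots,\alpha_{ir}}$, and you need to exhibit one with all these nonzero modulo $p$, all entries $<p^e$, and column sums matching a prescribed $\beta^*$. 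Lemma~\ref{claim finding exponents} does not do this---it produces vectors with row sum $p^e-1$, not $a$, and only handles $r\in\{c,c+1\}$---and you explicitly leave the general $r\ge c$ case as ``a careful digit-by-digit distribution'' without carrying it out. As written, the proof does not establish the theorem for $r\ge c+2$.

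The paper sidesteps the multinomial problem entirely by choosing \emph{unequal} exponents. It takes the $R$-side witness monomial $f_1^{a_1}\cdots f_r^{a_r}$ with $\sum a_i=\lceil tp^e\rceil$, reorders so $a_1\ge\cdots\ge a_r$, and then projects onto the diagonal monomial $u_{11}^{a_1}\cdots u_{cc}^{a_c}$ inside $g_1^{a_1}\cdots g_c^{a_c}$. Since the only way $u_{ii}^{a_i}$ can arise from $g_i^{a_i}=(\sum_j u_{ij}f_j)^{a_i}$ is by taking $(u_{ii}f_i)^{a_i}$, the coefficient is exactly $1$---no multinomials appear at all. The extra factors $f_{c+1}^{a_{c+1}}\cdots f_r^{a_r}$ are absorbed into a pre-multiplication of the original $R$-linear map $\phi$, and the inequality $a_1+\cdots+a_c\ge\lceil(c/r)\sum_i a_i\rceil$ (pigeonhole on the $c$ largest among $r$ nonnegative integers) supplies the factor $c/r$. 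If you replace your uniform exponent $a$ and the digit-transportation lemma with this ordered-exponent, diagonal-monomial trick, the argument closes cleanly for all $r$.
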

\begin{proof}
By Lemma \ref{lem: independent of generators for FPT}, we can add the generators of $\tilde{I}$ to those of $I$ and then assume that $\tilde{I}=(f_1,\ldots,f_r)$. Since $\tilde{I}$ is a reduction of $I$, it follows from \cite[Proposition 2.2(6)]{TakagiWatanabeFpurethresholds} that $\fp_R(I)=\fp_R(\tilde{I})$. Hence it suffices to show that $\tau_R(\tilde{I}^t)=R$ implies $\tau_S((g_1,\ldots, g_c)^{\frac{ct}{r}})=S$ for any positive real number $t<c$. To this end, assume that $\tau_R(\tilde{I}^t)=R$. By Remark \ref{remark: index of perfect fields}, we may assume that $k$ is algebraically closed.

We wish to show that $\tau_S((g_1,\ldots, g_c)^{\frac{ct}{r}})=S$. Suppose otherwise and we seek a contradiction. There is a maximal ideal $\m$ of $S$ such that $\tau_S((g_1,\ldots, g_c)^{\frac{ct}{r}})\subseteq \m$. Since $k$ is algebraically closed, we can write $\m=(x_1-a_1,\ldots,x_n-a_n,u_{11}-b_{11},\ldots,u_{cr}-b_{cr})$ for some $a_i,b_{ij}\in k$. Set $\n=(x_1-a_1,\ldots,x_n-a_n)$. Since $\tau_R(\tilde{I}^t)=R$, there exist an integer $e$, an $R$-linear map $\phi\in \Hom_R(R^{1/p^e},R)$, and nonnegative integers $\alpha_1,\ldots,\alpha_r$ with $\sum_i \alpha_i=\lceil tp^e \rceil$ such that $\phi(f^{\alpha_1/p^e}_1\cdots f^{\alpha_r/p^e}_r)\notin \n$.


At this point we show that each $f_j\in \n$, and therefore $\alpha_j\leq p^e-1$ for all $j$.  Indeed, let $e\geq 1$ such that $p^e\geq c/(c-t)$ and let  $\psi:S^{1/p^e}\to S$ send the basis element $u_{1j}^{(p^e-1)/p^e}u_{2j}^{(p^e-1)/p^e}\cdots u_{cj}^{(p^e-1)/p^e}$ to $1$ and all other basis elements $x_\ell^{a_\ell/p^e}u_{ij}^{b_{ij}/p^e}$ to $0$.  Now  $f_j^cg_1^{p^e-1}g_2^{p^e-1}\cdots g_c^{p^e-1}\in (g_1,\cdots,g_c)^{\lceil (ct/r)p^e\rceil}$, because $(ct/r)p^e\leq tp^e\leq c(p^e-1)$. Therefore
\begin{eqnarray*}
f_j^c&=&\psi(f_j^{c/p^e}u_{1j}^{(p^e-1)/p^e}u_{2j}^{(p^e-1)/p^e}\cdots u_{cj}^{(p^e-1)/p^e}f_j^{c(p^e-1)/p^e})\\
&=&\psi(f_j^{c/p^e}g_1^{(p^e-1)/p^e}g_2^{(p^e-1)/p^e}\cdots g_c^{(p^e-1)/p^e})\\
&\subseteq& \psi(((g_1,\cdots,g_c)^{\lceil (ct/r)p^e\rceil})^{1/p^e})\subseteq \m
\end{eqnarray*}
by our choice of $\psi$ and the assumption that $\tau_S((g_1,\ldots, g_c)^{\frac{ct}{r}})\subseteq \m$. It follows that $f_j\in \m\cap R=\n$.


Without loss of generality, we may assume that $p^e-1\geq \alpha_1\geq \alpha_2\geq \cdots \geq \alpha_r$. Consequently,
\[\alpha_1+\cdots +\alpha_c\geq \left\lceil{\frac{c}{r}(\alpha_1+\cdots +\alpha_r)}\right\rceil=\left\lceil\frac{c}{r}\lceil tp^e\rceil\right\rceil\geq \left\lceil\frac{c}{r}tp^e\right\rceil\]

Let $\phi_{\underline{\alpha}}=\phi(f^{\alpha_{c+1}/p^e}_{c+1}\cdots f^{\alpha_r/p^e}_r\cdot -)$, {\it i.e.} pre-multiplication by $f^{\alpha_{c+1}/p^e}_{c+1}\cdots f^{\alpha_r/p^e}_r$ followed by the application of $\phi$. It is clear that $\phi_{\underline{\alpha}}:R^{1/p^e}\to R$ is an $R$-linear map and that $\phi_{\underline{\alpha}}(f^{\alpha_1/p^e}_1\cdots f^{\alpha_c/p^e}_c)\notin \n$. We can extend $\phi_{\underline{\alpha}}$ to an $S$-linear map $\psi_{\underline{\alpha}}:R^{1/p^e}[u_{ij}]\to S=R[u_{ij}]$ that sends each $u_{ij}$ to itself and restricts to $\phi_{\underline{\alpha}}$ on $R^{1/p^e}$.

It is clear that $S^{1/p^e}=R^{1/p^e}[u^{1/p^e}_{ij}]$ is a free $R^{1/p^e}[u_{ij}]$-module with a basis $\{\prod_{0\leq b_{ij}\leq p^e-1}u^{b_{ij}/p^e}_{ij}\}$. Let $\pi_{\underline{\alpha}}:R^{1/p^e}[u^{1/p^e}_{ij}]\to R^{1/p^e}[u_{ij}]$ be the projection that sends $u^{\alpha_1/p^e}_{11}\cdots u^{\alpha_c/p^e}_{cc}$ to 1 and all other basis elements to 0.

Let $\theta_{\underline{\alpha}}$ be the composition of $S^{1/p^e}\xrightarrow{\pi_{\underline{\alpha}}}R^{1/p^e}[u_{ij}]\xrightarrow{\psi_{z_{\underline{a}}}} S$. It is clear that $\theta_{\underline{\alpha}}$ is $S$-linear. By the construction of $\pi_{\underline{\alpha}}$, it is straightforward to check that
\[\theta_{\underline{a}}(g^{\alpha_1/p^e}_1\cdots g^{\alpha_c/p^e}_c)=\theta_{\underline{\alpha}}((u_{11}f_1)^{\alpha_1/p^e}\cdots (u_{cc}f_c)^{\alpha_c/p^e})=\phi(f^{\alpha_1/p^e}_1\cdots f^{\alpha_r/p^e}_r).\]

Since $\phi(f^{\alpha_1/p^e}_1\cdots f^{\alpha_r/p^e}_r)$ in $R$ but not in $\n=(x_1-a_1,\ldots,x_n-a_n)$, we must have
\[\phi(f^{\alpha_1/p^e}_1\cdots f^{\alpha_r/p^e}_r)\notin \m=(x_1-a_1,\ldots,x_n-a_n,u_{11}-b_{11},\ldots,u_{cr}-b_{cr}),\] a contradiction to the assumption that $\tau_S((g_1,\ldots, g_c)^{\frac{ct}{r}})\subseteq \m$ (note that $g^{\alpha_1}_1\cdots g^{\alpha_c}_c\in (g_1,\ldots, g_c)^{\lceil \frac{ct}{r} p^e\rceil}$).
\end{proof}

We have some immediate corollaries.

\begin{corollary}
\label{corollary -- generic link lower bound}
Let $R=k[x_1,\dots,x_n]$ be a polynomial ring over a perfect field of characteristic $p$ and $I$ be an equi-dimensional and unmixed ideal of height $c$ in $R$. Let $J$ be a generic link of $I$ in $S=R[u_{ij}]$. The following hold:
\begin{enumerate}
\item If $I$ has a reduction generated by $r$ elements, then $\fp_{S}(J)\geq \frac{c}{r}\fp_{R}(I)$.
\item If $I$ has a reduction generated by $c$ elements, in particular if $I$ is a complete intersection, then $\fp_{S}(J)\geq \fp_R(I)$.
\item $\fp_{S}(J)\geq \frac{c}{n}\fp_{R}(I)$ (note $n=\dim(R)$).
\end{enumerate}
\end{corollary}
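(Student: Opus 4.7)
The plan is to derive all three parts from Theorem \ref{theorem: FPT lower bound on g's} together with the containment $(g_1,\ldots,g_c)\subseteq J$ built into the definition of a generic link. The first step is to record the elementary monotonicity of the $F$-pure threshold under inclusion: if $K\subseteq L$ are ideals of an $F$-finite domain, then $\fp(L)\geq \fp(K)$, because any element $d\in K^{\lfloor t(p^e-1)\rfloor}$ witnessing $F$-purity of $(S,K^t)$ also lies in $L^{\lfloor t(p^e-1)\rfloor}$ and so witnesses $F$-purity of $(S,L^t)$. Applied to $(g_1,\ldots,g_c)\subseteq J$, this yields
\[\fp_S(J)\;\geq\; \fp_S(g_1,\ldots,g_c).\]

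Combining this inequality with Theorem \ref{theorem: FPT lower bound on g's} proves (1) at once: if $I$ has a reduction generated by $r$ elements, then $\fp_S(J)\geq \fp_S(g_1,\ldots,g_c)\geq \frac{c}{r}\fp_R(I)$. Statement (2) is the special case $r=c$; note that a height-$c$ complete intersection is its own reduction with $c$ generators, so the hypothesis of (2) is automatic in that case.

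For (3) it suffices to exhibit, for an arbitrary unmixed ideal $I$ in $R=k[x_1,\ldots,x_n]$, a reduction of $I$ generated by at most $n$ elements, after which (1) with $r\leq n$ finishes the proof. I would first use Remark \ref{remark: index of perfect fields} together with Lemma \ref{lem: independent of generators for FPT} to replace $k$ by a sufficiently large infinite perfect extension without changing $\fp_R(I)$ or $\fp_S(J)$; over such a field, $n$ sufficiently general $k$-linear combinations of the given generators of $I$ form a reduction, by the standard analytic-spread bound $\ell(I_\m)\leq \dim R_\m\leq n$ at each relevant maximal ideal together with the Zariski openness of the ``is a reduction'' condition in the choice of coefficients. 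The main technical point I foresee is precisely this last step: guaranteeing a single globally valid reduction from a local analytic-spread bound, which I would handle by a prime-avoidance/generic-choice argument over the enlarged base field before invoking (1).
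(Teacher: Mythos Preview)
Your argument for (1) and (2) is exactly the paper's: use $(g_1,\ldots,g_c)\subseteq J$ to get $\fp_S(J)\geq \fp_S(g_1,\ldots,g_c)$, then invoke Theorem~\ref{theorem: FPT lower bound on g's}.

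For (3) you have the same overall strategy---enlarge the base field so it is infinite, produce a reduction of $I$ with $n$ generators, then apply (1)---but the paper handles the existence step differently. Rather than arguing via local analytic-spread bounds and a prime-avoidance patching, the paper passes to the algebraic closure of $k$ (perfect and infinite) via Remark~\ref{remark: index of perfect fields} and then simply cites Lyubeznik's theorem that every ideal in a polynomial ring of dimension $n$ over an infinite field admits a reduction generated by $n$ elements. The ``main technical point'' you flag---globalizing a local reduction---is precisely the content of that reference, so your sketch is heading toward a reproof of that result rather than a citation. Also, Lemma~\ref{lem: independent of generators for FPT} is not needed for the field-extension step; Remark~\ref{remark: index of perfect fields} alone suffices, since the generic link construction commutes with the base change $k\subseteq \overline{k}$ and the remark applies equally to $J$ in $S$.
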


\begin{proof}
To prove (1), note that since $(g_1,\dots,g_c)\subseteq J$, we have $\fp_S(J)\geq \fp_S(g_1,\dots,g_c)$. Theorem \ref{theorem: FPT lower bound on g's} then completes the proof.

(2) is an immediate consequence of (1).

(3) By Remark \ref{remark: index of perfect fields}, passing to the algebraic closure of $k$ doesn't affect $\fp_R(I)$ and $\fp_S(J)$. Hence we can assume that $k$ is algebraically closed and hence is infinite. \cite[Theorem]{LyubeznikReductionidealsinPolynomialRings} asserts that each ideal $I$ admits a reduction generated by $n$ elements. We are done by (1).
\end{proof}

\bibliographystyle{alpha}
\bibliography{CommonBib}

\begin{thebibliography}{BSTZ10}

\bibitem[BMS08]{BlickleMustataSmithDiscretenessRationalityF-Thresholds}
Manuel Blickle, Mircea Musta{\c{t}}{\v{a}}, and Karen~E. Smith.
\newblock Discreteness and rationality of {$F$}-thresholds.
\newblock {\em Michigan Math. J.}, 57:43--61, 2008.
\newblock Special volume in honor of Melvin Hochster.

\bibitem[BST15]{BlickleSchwedeTuckerF-singularitiesvialterations}
Manuel Blickle, Karl Schwede, and Kevin Tucker.
\newblock {$F$}-singularities via alterations.
\newblock {\em Amer. J. Math.}, 137(1):61--109, 2015.

\bibitem[BSTZ10]{BlickleSchwedeTakagiZhangDiscretenessRationality}
Manuel Blickle, Karl Schwede, Shunsuke Takagi, and Wenliang Zhang.
\newblock Discreteness and rationality of {$F$}-jumping numbers on singular
  varieties.
\newblock {\em Math. Ann.}, 347(4):917--949, 2010.

\bibitem[CU02]{ChardinUlrichLiaisonandCMregularity}
Marc Chardin and Bernd Ulrich.
\newblock Liaison and {C}astelnuovo-{M}umford regularity.
\newblock {\em Amer. J. Math.}, 124(6):1103--1124, 2002.

\bibitem[EHU04]{EisenbudHunekeUlrichHeightsofidealsofminors}
David Eisenbud, Craig Huneke, and Bernd Ulrich.
\newblock Heights of ideals of minors.
\newblock {\em Amer. J. Math.}, 126(2):417--438, 2004.

\bibitem[Fed83]{FedderFPureRat}
Richard Fedder.
\newblock {$F$}-purity and rational singularity.
\newblock {\em Trans. Amer. Math. Soc.}, 278(2):461--480, 1983.

\bibitem[Har98]{HaraRatImpliesFRat}
Nobuo Hara.
\newblock A characterization of rational singularities in terms of injectivity
  of {F}robenius maps.
\newblock {\em Amer. J. Math.}, 120(5):981--996, 1998.

\bibitem[Har01]{HaraGeometricinterpretation}
Nobuo Hara.
\newblock Geometric interpretation of tight closure and test ideals.
\newblock {\em Trans. Amer. Math. Soc.}, 353(5):1885--1906, 2001.

\bibitem[HT04]{HaraTakagiOnGeneralizationTestideals}
Nobuo Hara and Shunsuke Takagi.
\newblock On a generalization of test ideals.
\newblock {\em Nagoya Math. J.}, 175:59--74, 2004.

\bibitem[HU85]{HunekeUlrichDivisorclassgroupsdeformations}
Craig Huneke and Bernd Ulrich.
\newblock Divisor class groups and deformations.
\newblock {\em Amer. J. Math.}, 107(6):1265--1303 (1986), 1985.

\bibitem[HU87]{HunekeUlrichThestructureoflinkage}
Craig Huneke and Bernd Ulrich.
\newblock The structure of linkage.
\newblock {\em Ann. of Math. (2)}, 126(2):277--334, 1987.

\bibitem[HU88]{HunekeUlrichAlgebraiclinkage}
Craig Huneke and Bernd Ulrich.
\newblock Algebraic linkage.
\newblock {\em Duke Math. J.}, 56(3):415--429, 1988.

\bibitem[HY03]{HaraYoshidaGeneralizedTestIdeals}
Nobuo Hara and Ken-Ichi Yoshida.
\newblock A generalization of tight closure and multiplier ideals.
\newblock {\em Trans. Amer. Math. Soc.}, 355(8):3143--3174 (electronic), 2003.

\bibitem[Lyu86]{LyubeznikReductionidealsinPolynomialRings}
Gennady Lyubeznik.
\newblock A property of ideals in polynomial rings.
\newblock {\em Proc. Amer. Math. Soc.}, 98(3):399--400, 1986.

\bibitem[Mat86]{Matsumura86}
Hideyuki Matsumura.
\newblock {\em Commutative ring theory}, volume~8 of {\em Cambridge Studies in
  Advanced Mathematics}.
\newblock Cambridge University Press, Cambridge, 1986.
\newblock Translated from the Japanese by M. Reid.

\bibitem[Niu14]{NiuSingularitiesofgenericlinkage}
Wenbo Niu.
\newblock Singularities of generic linkage of algebraic varieties.
\newblock {\em Amer. J. Math.}, 136(6):1665--1691, 2014.

\bibitem[Sch11]{SchwedeTestidealsinnonQ-Gorensteinrings}
Karl Schwede.
\newblock Test ideals in non-{$\Bbb{Q}$}-{G}orenstein rings.
\newblock {\em Trans. Amer. Math. Soc.}, 363(11):5925--5941, 2011.

\bibitem[Smi97]{SmithFRatImpliesRat}
Karen~E. Smith.
\newblock {$F$}-rational rings have rational singularities.
\newblock {\em Amer. J. Math.}, 119(1):159--180, 1997.

\bibitem[ST12]{SchwedeTuckerAsurveyoftestideals}
Karl Schwede and Kevin Tucker.
\newblock A survey of test ideals.
\newblock In {\em Progress in commutative algebra 2}, pages 39--99. Walter de
  Gruyter, Berlin, 2012.

\bibitem[Tak04]{TakagiFsingularitiesofpairsandinversionofadjunction}
Shunsuke Takagi.
\newblock F-singularities of pairs and inversion of adjunction of arbitrary
  codimension.
\newblock {\em Invent. Math.}, 157(1):123--146, 2004.

\bibitem[TW04]{TakagiWatanabeFpurethresholds}
Shunsuke Takagi and Kei-ichi Watanabe.
\newblock On {F}-pure thresholds.
\newblock {\em J. Algebra}, 282(1):278--297, 2004.

\end{thebibliography}
\end{document}